\definecolor{pennblue}{RGB}{1,31,91} 
\definecolor{pennred}{RGB}{153,0,0}	  
     \definecolor{MyDarkBlue}{rgb}{0,0.1,0.7}
\theoremstyle{plain}
\newtheorem{theorem}{Theorem}
\newtheorem{definition}[theorem]{Definition}
\newtheorem{proposition}[theorem]{Proposition}
\newtheorem{remark}[theorem]{Remark}
\newtheorem{corollary}[theorem]{Corollary}
\numberwithin{equation}{section}
\numberwithin{theorem}{section}
\title[Traveling wave solutions of PDEs via neural networks]{Traveling wave solutions of partial differential equations via neural networks}
\author[S. Cho]{Sung Woong Cho$^{*,1}$}
\address{$^*$Pohang University of Science and Technology, Pohang 37673, Republic of Korea. \href{mailto:swcho95kr@gmail.com}{swcho95kr@gmail.com} }
\author[H. J. Hwang]{Hyung Ju Hwang$^\dagger$}
\address{$^\dagger$Pohang University of Science and Technology, Pohang 37673, Republic of Korea. \href{mailto:hjhwang@postech.ac.kr}{hjhwang@postech.ac.kr} }
\author[H. Son]{Hwijae Son$^{\star,1}$}
\address{$^\star$Pohang University of Science and Technology, Pohang 37673, Republic of Korea. \href{mailto:son9409@postech.ac.kr}{son9409@postech.ac.kr} }
   \def\MR#1{}
\begin{document}


\date{\today}

\let\thefootnote\relax\footnotetext{2010 \textit{Mathematics Subject Classification.} Primary: 35C07, 68T99, 92C17.\\
	\textit{Key words and phrases.} Traveling wave solution, Estimation of wave speed, Neural networks, Convergence.\\
	$^\dagger$ corresponding author\\
	$^1$ equal contribution}
\addtocounter{footnote}{-1}\let\thefootnote\svthefootnote

\begin{abstract}
This paper focuses on how to approximate traveling wave solutions for various kinds of partial differential equations via artificial neural networks. A traveling wave solution is hard to obtain with traditional numerical methods when the corresponding wave speed is unknown in advance. We propose a novel method to approximate both the traveling wave solution and the unknown wave speed via a neural network and an additional free parameter. We proved that under a mild assumption, the neural network solution converges to the analytic solution and the free parameter accurately approximates the wave speed as the corresponding loss tends to zero for the Keller--Segel equation. We also demonstrate in the experiments that reducing loss through training assures an accurate approximation of the traveling wave solution and the wave speed for the Keller--Segel equation, the Allen--Cahn model with relaxation, and the Lotka--Volterra competition model.
\end{abstract}

\setcounter{tocdepth}{2}

\maketitle
\thispagestyle{empty}

\section{Introduction} \label{sec1}

\subsection{Motivation}
In this paper, we propose a novel method for approximating traveling wave solutions via deep neural networks. Traveling wave solution, a special form of the particular solutions of partial differential equations (PDEs) has been studied extensively. For several equations, in the case that the boundary condition consists of two different equilibrium points of the system, an interval for the wave speed where a traveling wave solution exists has been demonstrated (see, \cite{salako2016spreading, bramburger2020exact, chen2017existence}). The authors in \cite{larson1978transient, li2011asymptotic, wu2017uniqueness} discussed that even if we add a small perturbation to the traveling wave profile, it converges to the original shape. Furthermore, it is known for the Keller--Segel equation and the Lotka--Volterra competition model that a unique traveling wave solution with a unique wave speed exists up to translation (see, \cite{kan1995parameter, li2012steadily}). 

\par Although finding a traveling wave solution seems like a relatively simple ODE problem, approximating a numerical solution is not a self-evident process when the wave speed is unknown in advance. There have been several attempts to numerically approximate the wave speed by finding a new variable that has a monotone dependency on the wave speed in \cite{lattanzio2016analytical, mansour2008traveling}, but there is no theoretical evidence to guarantee the convergence to the wave speed.

\par An artificial neural network is a natural candidate for finding a traveling wave solution since it can easily model the dependency of the solution to an ansatz variable which commonly appears in the traveling wave literature. Furthermore, the universal approximation property of neural networks suggests the possibility of approximating solutions of the partial differential equations. By penalizing a neural network to satisfy given PDEs, one can guarantee the convergence of the neural network to an actual solution using the energy estimate method (see, \cite{sirignano2018dgm, hwang2020trend, NHM}).

\par In this work, we propose a novel method that simultaneously approximates the traveling wave solution of given PDEs and the wave speed. We employ fully-connected neural networks to approximate the solutions of PDEs and an additional free parameter to approximate the wave speed. We prove the convergence of both neural networks and the free parameter to the analytic solutions and the actual wave speed, respectively, for the Keller--Segel model. Moreover, the experimental results show that our estimated speeds agree with the analytic results of the Keller--Segel equation, the Lotka--Volterra equation, and the Allen--Cahn model for various kinds of parameter settings.

\subsection{Related Works}

Early studies focused on finding an explicit form of the traveling wave solution. Assuming a specific functional form of solutions (e.g. a rational function with numerator and denominator of sums of exponential functions), they solved some well-posed problems (see, \cite{he2006exp, malfliet1992solitary, wazwaz2007tanh}). In \cite{rosu2005supersymmetric, wang2005new}, multi-dimensional traveling wave solutions were constructed by using the solution of the Riccati equation. The authors in \cite{qin2018rogue} proposed one to find a simpler sufficient condition for the solutions of the original equation by factoring the differential operator.

\par 
There are several works that try to numerically approximate the solution, assuming a specific functional form, such as a polynomial function, for the nonlinear term. For the non-linear Allen--Cahn equation, assuming a fractional power series solution, the solution is iteratively calculated in \cite{tariq2017new} using the condition that the coefficients must meet for the residual term to be zero. Assuming that the nonlinear term is an admonian polynomial, an integral iteration guarantees the convergence after combining the initial conditions and the governing equation by the Laplace transform for the fractional Whitham-Broer-Kaup equation (see, \cite{ali2018numerical}). For the Korteweg–de Vries equation, the authors in \cite{kaya2004exact} applied the basis extension method for the solution under the assumption that the nonlinear terms are admonian polynomials.

\par Stability of the traveling wave solutions has been also actively studied. For an equation containing a nonlinear fisher term, the authors in \cite{hagan1982traveling, larson1978transient} showed that when the initial data has exponential decay, it converges to a traveling wave solution with a certain wave speed. For the reaction-advection-diffusion equation in \cite{wang2007existence}, a unique traveling wave solution exists and all solutions converge to that traveling wave. Given the boundary conditions for the classical Keller--Segel equation, a unique traveling wave solution exists and its stability against small initial perturbations has been demonstrated in \cite{li2012steadily}. Stability for the traveling wave solution with a speed above a certain value in a multi-type SIS non-localized epidemic model was provided in \cite{wu2017uniqueness}. The aforementioned theoretical results for the stability also contributed to the numerical method for the traveling wave solution where the exact speed values are unknown. The authors in \cite{yang2020numerical} calculated the wave speed of a traveling wave solution with a globally stable equilibrium point as an endpoint. More specifically, they assume that the solution has converged to a steady state after a long time period, and observe how fast the point moves from that time.
Since it is difficult to implement the infinite domain numerically, the boundary condition was newly processed using the exponential decay rate of the traveling wave solution (see, \cite{hagstrom1986numerical}). 
Using only classical FDM, it was verified that a solution of the original equation approaches to a traveling wave solution with a specific speed in the Fisher's equation.

\par 
Several papers have introduced methods that directly estimate the value of speed. For the Allen--Cahn model with relaxation model which is a coupled equation without a diffusion term, the wave speed was estimated in \cite{lattanzio2016analytical} using a function with a monotonic dependency on the speed. 
A similar method was proposed to find the minimum wave speed at which a unique traveling wave solution exists for the reaction-diffusion chemotaxis model, by comparing the trajectories connecting equilibriums (see, \cite{mansour2008traveling}). For the Keller--Segel equation with the Fisher birth terms, the authors in \cite{bramburger2020exact} determined whether a heteroclinic orbit could leave certain regions and obtained an exact minimum wave speed. An analogous discussion was developed in \cite{chen2017existence} for the isothermal diffusion system.

\subsection{Outline of the paper} 
In Section \ref{sec2}, we introduce the models, the loss functions, and the training procedure. In Section \ref{sec3}, we cover the Keller--Segel (KS) equation derived by adding a singular term to the classical Patlak--Keller--Segel equation. We prove that our method can accurately approximate the traveling wave solution as well as the wave speed by reducing the proposed loss function. Additionally, we derive a uniform bound for the difference between a neural network solution and an analytic solution. The experiments that support our theoretical results are also presented in Section \ref{sec3}. In Section \ref{sec4} and \ref{sec5}, by simply modifying the theorems proved in Section \ref{sec3}, we apply our method to other equations, the Allen--Cahn model with relaxation and the Lotka--Volterra competition model. Finally, the article concludes in Section \ref{sec6} by introducing issues that may be addressed in the future.

\section{Methodology} \label{sec2}
In this paper, we consider several systems of PDEs that attain traveling wave solutions. Consider a system of PDE :

\begin{align}\label{pde_system}
    \begin{split}
        u_t(t,x) &= F(u,v),  \\
        v_t(t,x) &= G(u,v),  \\
        (u,v)(t,0) &= (u_0(x),v_0(x)) \rightarrow \begin{cases}
        (u_{-}, v_{-}) \quad \text{as } x \rightarrow -\infty,\\
        (u_{+}, v_{+}) \quad \text{as } x \rightarrow +\infty, 
        \end{cases}
    \end{split}
\end{align}
where $F$, and $G$ are arbitrary differential operators. We denote the characteristics by $z = x-st$, where $s$ denotes the wave speed, and the traveling wave solutions by $U(z) = U(x-st) = u(t,x)$, and $V(z) = V(x-st) = v(t,x)$. Using this representation, we can rewrite \eqref{pde_system} as :
\begin{align}
    P(U, V; s) &= 0 \nonumber \\
    Q(U, V; s) &= 0 \nonumber \\
    (U, V)(z) &\rightarrow \begin{cases}
    (u_{-}, v_{-}) \quad \text{as } z \rightarrow -\infty,\\
    (u_{+}, v_{+}) \quad \text{as } z \rightarrow +\infty, 
    \end{cases}\nonumber
\end{align}
where P, Q are the differential operators that can be computed from \eqref{pde_system}.

In this section, we provide a detailed description of our methodology for finding approximations of the traveling wave solutions. We use two neural networks $U^{nn}$, $V^{nn}$, and a free parameter $s^{nn}$ to approximate the solutions $U, V$ and the wave speed $s$, respectively. We treat the approximation problem as an optimization problem of a properly defined loss function with respect to the parameters of $U^{nn}, V^{nn}$, and the free parameter $s^{nn}$. Previously, a universal method using a neural network was presented in \cite{lu2021deepxde} for approximating a solution of PDEs involving unknown parameters. The main differences from our method lie in that the constraints that unknown parameters must satisfy are directly reflected in the structure of the neural network and that we introduce an additional loss function to handle infinite domains. The formulation will be justified in Section \ref{sec3}.

\subsection{Neural Network Model}
The fully connected neural networks $U^{nn}$, and $V^{nn}$ take the spatio-temporal grid points $(t,x)$ as inputs, and output the approximations of $U(t,x)$, and $V(t,x)$, respectively. In this paper, a special layer is added between the input and the first hidden layer, the characteristics (or traveling wave ansatz) layer. The characteristics layer transforms a given spatio-temporal grid point $(t,x)$ to a point on the characteristic line $z = x-s^{nn}t$, where $s^{nn}$ denotes an approximator for the wave speed $s$. To explain it more precisely, we consider a neural network that consists of $L+2$ layers with $L-1$ hidden layers. The values of neurons belonging to each layer are determined by the following recurrence equation:
\begin{align} \label{recurrence}
\begin{cases}
N_{0}(t,x)=N_{0}^{(1)}(t,x)=x-s^{nn}t,\\
\displaystyle N_{l}^{(j)}=\sigma(\sum_{i=1}^{h}w_{l}^{(i, j)}N_{l-1}^{(i)}+b_l^{(j)}), \quad \text{for} \quad l=1, 2, ..., L-1,\\
\displaystyle N_{L}(t,x)=N_{L}^{(1)}(t,x)=\sum_{i=1}^{h}w_{L}^{(i,1)}N_{L-1}^{i},
\end{cases}
\end{align}
where $N_{0}(t,x)$ denotes the characteristics layer, $N_{L}^{(i)}$ denotes the $i$-${th}$ neuron of the $L$-$th$ layer, $h$ denotes the number of neurons in each layer, $\sigma$ denotes an activation function, and $w^{(i,j)}_L$, $b^{(i)}_l$ denote the weight and bias in each layer. Due to the presence of $N_0$ layer, $N_L(t,x)$ becomes a traveling wave function with a characteristic line of slope $s^{nn}$ in the $(t,x)$ plane. Therefore, our neural networks become $U^{nn}(t,x) = U^{nn}(x-s^{nn}t)$, and $V^{nn}(t,x) = V^{nn}(x-s^{nn}t)$.

Additionally, if the exact bound of the solution is known, one more activation function can be implemented to the output layer in order for the range of the neural network to meet the known bound. Then, the equation below replaces the third equation of \eqref{recurrence}.
\begin{equation}
\displaystyle N_{L}(t,x)=3(u_--u_+)S(\sum_{i=1}^{h}w_{L}^{(i,1)}N_{L-1}^{i})+(2u_+-u_-), \nonumber
\end{equation}
where $S(x)$ denotes the sigmoid function $\frac{e^{x}}{1+e^{x}}$. 

\begin{remark}
We note here that it is possible to create a network that receives one dimensional input rather than (t,x) by taking the traveling wave ansatz in the given equation in advance. However, the above method can be generalized to more complex cases, since it can be applied by slightly modifying the form of $N_0(t,x)$ when the shape of the characteristics changes or even when the shape is not determined. 
\end{remark}

\subsection{Loss functions} \label{sec2.2}
Now we define the loss functions. Firstly, we define the $L^2$ loss for the governing equation in the interior region. Since the ansatz variable $z$ lies in $\mathbb{R}$, we need to define the loss function on $\mathbb{R}$. However, it is difficult to deal with the infinite domain when training a neural network. Therefore, we truncate the real line by $[-a, a]$ for some large $a$ as in \cite{hwang2020trend}. Then the loss function for each governing equation is defined by :

\begin{align}
   Loss_{GE}^{(1)} = \int_{-a}^{a} (P(U^{nn}, V^{nn}; s^{nn}))^2 dz \approx \sum_{i} P(U^{nn}(z_i), V^{nn}(z_i); s^{nn})^2, \nonumber\\
   Loss_{GE}^{(2)} = \int_{-a}^{a} (Q(U^{nn}, V^{nn}; s^{nn}))^2 dz \approx \sum_{i} Q(U^{nn}(z_i), V^{nn}(z_i); s^{nn})^2. \nonumber
\end{align}
We then define the loss function for the governing equation by combining the losses.
\begin{align}
    Loss_{GE} = Loss^{(1)}_{GE}+Loss^{(2)}_{GE}. \nonumber
\end{align}
Since it is also difficult to impose an asymptotic boundary condition, we bypass the goal to reduce the difference between the extreme value and the value at the end of the boundary interval.
\begin{align*}
    Loss^{(1)}_{Limit} &= (U^{nn}(-a)-u_{-})^2  + (U^{nn}(a)-u_{+})^2, \\
    Loss^{(2)}_{Limit} &= (V^{nn}(-a)-v_{-})^2  + (V^{nn}(a)-v_{+})^2, \\
    Loss_{Limit} &= Loss^{(1)}_{Limit} + Loss^{(2)}_{Limit}. \nonumber
\end{align*}

In practice, the integral is approximated by the Monte--Carlo method. Using fixed points to approximate the integral is not suitable for functions that change rapidly in values. For such an intuitive reason, we uniformly sample new grid points from $[-a,a]$ to approximate the loss function for each training epoch. The iterative sampling technique is first introduced in \cite{sirignano2018dgm}.

We add the following Neumann boundary condition to more accurately estimate the wave speed (it will be further demonstrated in the next section). If the derivatives have a limit on each side, the limit must be zero. Therefore, the Neumann boundary condition is a reasonable constraint for finding the solution.
\begin{align} 
    \displaystyle Loss_{BC} = (\frac{d}{dz} U^{nn}(-a))^2 + (\frac{d}{dz} U^{nn}(a))^2 + (\frac{d}{dz} U^{nn}(-a))^2 +(\frac{d}{dz} U^{nn}(a))^2. \nonumber
\end{align}
Since the translation of the traveling wave solution becomes a solution again, we fix the solutions at a point $z=0$. Because at least one component of the solutions $(U,V)$ is a monotone function, we give the label at point $z=0$ by the mean of the limits. The loss below prevents translation so that increasing the value of $a$ has the effect of widening both sides of the domain.

\begin{align}
    \displaystyle Loss_{Trans} = (U^{nn}(0)-\frac{u_-+u_+}{2})^2, \quad or \quad  Loss_{Trans} = (V^{nn}(0)-\frac{v_-+v_+}{2})^2. \nonumber
\end{align}
The optimization process reduces the total loss created by combining all the losses defined above. We present the overall architecture in Figure \ref{fig:overall}.
\begin{align}
    Loss_{Total} = Loss_{GE} + Loss_{Limit} + Loss_{BC} + Loss_{Trans}. \nonumber
\end{align}

\begin{figure}[h]
\centering
\includegraphics[width=0.8\linewidth]{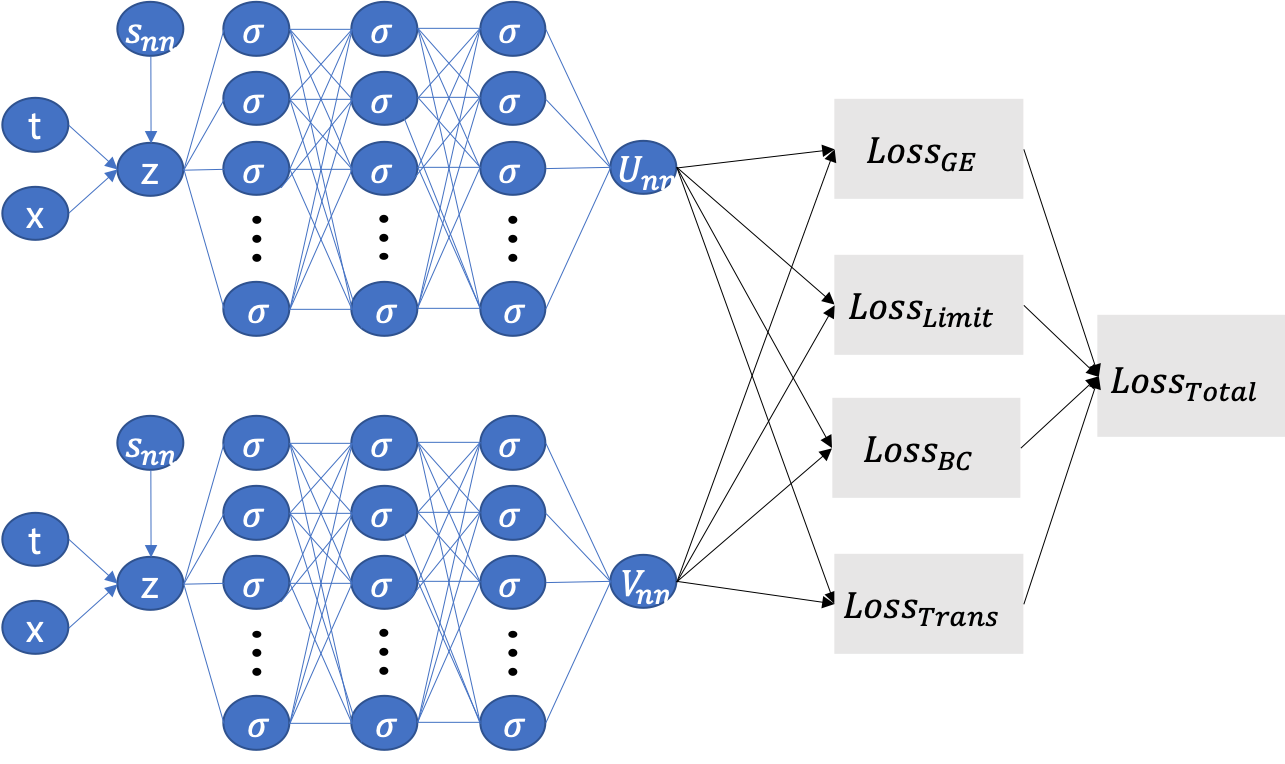} 
\caption{Pictorial description of the overall architecture.}
\label{fig:overall} 
\end{figure}

\subsection{Training}
The training procedure consists of two parts: feed-forward and back-propagation. The first step is simply feeding the input $(t,x)$ together with $s^{nn}$ through the forward path of the neural network. The second step, so called back-propagation, is to compute all the partial derivatives of the loss function with respect to $w^{(i,j)}_l$, $b^{(i)}_l$, and $s^{nn}$, and this can be easily done by Automatic Differentiation (see, \cite{paszke2017automatic} for more information). Once we compute all the partial derivatives, we optimize the loss function so that $U^{nn}$, $V^{nn}$, and $s^{nn}$ approximate the solutions and the wave speed. More specifically, we solve an optimization problem stated below:
\begin{equation}
    \underset{U^{nn}, V^{nn}, s^{nn}}{\text{minimize}} Loss_{Total} (U^{nn}, V^{nn}; s^{nn}). \nonumber
\end{equation} 

The minimization problem can be solved by a gradient based optimization technique. By using the partial derivatives, one can perform the gradient descent step by: 
\begin{align}
    w_{l}^{(i,j)} \leftarrow w_{l}^{(i,j)} - \eta \frac{\partial Loss_{Total}}{\partial w_{l}^{(i,j)}}, \nonumber\\
    b_{l}^{(i)} \leftarrow b_{l}^{(i)} - \eta \frac{\partial Loss_{Total}}{\partial b_{l}^{(i)}}, \nonumber\\
    s^{nn} \leftarrow s^{nn} - \eta \frac{\partial Loss_{Total}}{\partial s^{nn}}, \nonumber
\end{align}
where $\eta$ denotes the learning rate. In this paper, we employed ADAM as an optimizer which is one of the most popular optimizers \cite{kingma2014adam}.
\section{Applications to the Keller--Segel Model} \label{sec3}
This section describes the details of the deep neural network used for the approximation of traveling wave solutions. Primarily, we deal with the classical Keller--Segel model with a chemotactic sensitivity term in the form of a logarithmic function which was initially devised in \cite{keller1971traveling}. The exact model is as follows.
\begin{align}
\label{KSO}
\begin{cases}
 u_t=(Du_x-\chi u c^{-1}c_x)_x, \\ 
 c_t=\epsilon c_{xx}-uc+\beta c, 
\end{cases}
\end{align}
with the boundary conditions
\begin{align}
    (u(0,x),v(0,x))=(u_0(x), v_0(x)), \quad where\quad (u_0(\pm \infty) , v_0(\pm \infty))=(u_{\pm}, v_{\pm}). \nonumber
\end{align}
Due to the difficulty of numerical computations, the singular term $c^{-1}$ was eliminated by substituting $-c_x/c=-(\ln c)_x$ to $v$ (commonly called hopf cole transformation, see, \cite{li2012steadily}). 
By imposing the traveling wave ansatz $(u,v)(x,t)=(U,V)(x-st)$, we finally derive the following ordinary differential equation below
\begin{align}
\label{3.1}
\begin{cases}
 sU_z+\chi(UV)_z+DU_{zz}=0, \\ 
 sV_z-(\varepsilon V^2 -U)_z+\varepsilon V_{zz}=0, 
\end{cases}
\end{align}
with the boundary condition $(U, V)(\pm \infty)=(u_{\pm}, v_{\pm}) $ where $u_{\pm}\ge0, v_{\pm}\le 0$. 
\\As a prerequisite for the existence of a value of s, we assume that the boundary condition is given only within the following constraints:
\begin{align}
\frac{u_{+}-u_{-}}{v_{+}-v_{-}}=\frac{\chi(u_-v_- -u_+v_+)}{\varepsilon(v_+)^2 -\varepsilon(v_-)^2 +u_- -u_+}. \nonumber
\end{align}

We refer to a proposition mentioning that the speed where the solution exists is unique, and that the solution is also unique except for the possibility of translation.
\begin{proposition}
[Proposition 2.1 in \cite{li2012steadily}] For a sufficiently small $\epsilon>0$, the solution of \eqref{3.1} satisfying the boundary condition has a monotone shock solution which is unique up to translation and satisfies $U_z<0, V_z>0$. Furthermore, the unique wave speed is explicitly evaluated as \begin{align} \label{eq3.2}
    s=-\frac{\chi v_-}{2}+\frac{1}{2}\sqrt{\chi^2 {v_-}^2+4u_{+}\chi \left[1-\varepsilon \frac{{v_+}^2-{v_-}^2}{u_+-u_-} \right]}.
\end{align}     
\end{proposition}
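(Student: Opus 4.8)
The plan is to reduce the second-order ODE system \eqref{3.1} to a planar autonomous system, read off the wave speed algebraically, and then produce the connecting orbit by a fast--slow (singular perturbation) argument in $\varepsilon$.

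First I would exploit the fact that both equations of \eqref{3.1} are in divergence form in $z$. Integrating each from $-\infty$ to $z$, and using that a bounded monotone profile must have $U_z,V_z\to 0$ as $z\to\pm\infty$ (which one justifies a posteriori once boundedness is in hand, by a Landau-type argument), one obtains the first-order planar system $D U_z=-sU-\chi UV+(su_-+\chi u_-v_-)$ and $\varepsilon V_z=\varepsilon V^2-sV-U+(sv_--\varepsilon v_-^2+u_-)$, together with two compatibility identities coming from $z\to+\infty$: $s(u_+-u_-)=\chi(u_-v_--u_+v_+)$ and $s(v_+-v_-)=\varepsilon(v_+^2-v_-^2)+u_--u_+$. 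The boundary-data constraint stated just before the proposition is precisely the elimination of $s$ between these two identities, so it is the necessary compatibility condition for a front to exist.

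Second, I would derive \eqref{eq3.2} purely algebraically: solving the second identity for $u_+-u_-$ and substituting into the first and simplifying produces the quadratic $s^2+\chi v_-s-u_+\chi\bigl[1-\varepsilon\tfrac{v_+^2-v_-^2}{u_+-u_-}\bigr]=0$. Since $v_-\le 0$ the sum of the roots is $-\chi v_-\ge 0$, and for $\varepsilon$ small the constant term is negative (using $u_+>0$ and that $\tfrac{v_+^2-v_-^2}{u_+-u_-}$ stays bounded), so the roots have opposite signs and a right-moving front must take the positive one, which is exactly \eqref{eq3.2}.

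Third --- the analytic heart --- I would prove existence, monotonicity, and uniqueness up to translation of the heteroclinic orbit of the planar system from $(u_-,v_-)$ to $(u_+,v_+)$. Here the structural point is that $\varepsilon$ multiplies $V_z$, so this is a slow--fast system: at $\varepsilon=0$ the $V$-equation degenerates to the algebraic critical manifold $U=u_-+s(v_--V)$, which passes through both equilibria, and the reduced flow is the scalar ODE $DU_z=\Phi(U)$ with $\Phi$ quadratic and $\Phi(u_-)=\Phi(u_+)=0$; checking the sign of $\Phi$ on $(u_+,u_-)$ gives the singular heteroclinic, with $U_z<0$ immediate and $V_z>0$ forced by the algebraic relation. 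I would then invoke geometric singular perturbation theory (Fenichel) to persist this connection, and its monotonicity, to all sufficiently small $\varepsilon>0$, realizing it as the transverse intersection of the one-dimensional unstable manifold of $(u_-,v_-)$ with the stable manifold of $(u_+,v_+)$; uniqueness up to translation is then automatic, since fixing one point on that one-dimensional unstable manifold fixes the orbit up to the $z$-shift.

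The main obstacle I anticipate is exactly this last step: verifying normal hyperbolicity of the critical manifold all along the connection (equivalently, that the $V$-nullcline never becomes tangent, so that one controls the sign of the relevant partial derivative globally and not merely near $\varepsilon=0$), and exhibiting an invariant region in the $(U,V)$-plane, trapped between the two nullclines, that confines the orbit so that $U_z<0$ and $V_z>0$ are maintained for the whole trajectory. Everything else is either linear algebra --- computing the eigenvalues at $(u_\pm,v_\pm)$ to confirm the saddle/node configuration that makes the matching transverse --- or the routine bookkeeping of the Fenichel continuation.
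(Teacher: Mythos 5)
This proposition is quoted verbatim from Li and Wang \cite{li2012steadily}; the paper contains no proof of it, so there is no in-paper argument to compare yours against, and I can only assess your proposal on its own terms. As a strategy it is sound and essentially complete, and the algebra checks out: integrating the two divergence-form equations of \eqref{3.1} once yields exactly the planar system you write, the limits at $z\to+\infty$ give $s(u_+-u_-)=\chi(u_-v_--u_+v_+)$ and $s(v_+-v_-)=\varepsilon(v_+^2-v_-^2)+u_--u_+$, eliminating $s$ reproduces the compatibility constraint displayed just before the proposition, and combining the two identities (after writing $u_-v_--u_+v_+=v_-(u_--u_+)+u_+(v_--v_+)$) gives your quadratic $s^2+\chi v_- s-\chi u_+\left[1-\varepsilon\frac{v_+^2-v_-^2}{u_+-u_-}\right]=0$, whose positive root is \eqref{eq3.2}.

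Two points can be tightened. First, no root selection is actually required: the first identity alone pins down $s=\chi(u_-v_--u_+v_+)/(u_+-u_-)$, which is positive under the orderings $u_->u_+\ge 0$ and $v_-<v_+\le 0$ forced by $U_z<0$, $V_z>0$; the second root of the quadratic is spurious, and uniqueness of the wave speed comes for free from this linear relation rather than from the dynamics. Second, the normal hyperbolicity you flag as the main obstacle is in fact uniform and immediate: the fast field at $\varepsilon=0$ is $-sV-U+\mathrm{const}$, whose $V$-derivative is $-s<0$ along the entire critical manifold, so the slow manifold is globally attracting and Fenichel applies without further checking. The one genuinely nontrivial remaining step is the one you correctly identify, namely propagating $U_z<0$, $V_z>0$ from the singular limit to $\varepsilon>0$; this is most cleanly done via the invariant lens-shaped region between the two nullclines joining $(u_-,v_-)$ to $(u_+,v_+)$, together with the observation that the Jacobian at $(u_-,v_-)$ has determinant approximately $\chi(u_+-u_-)/(D\varepsilon)<0$, so that point is a saddle with a one-dimensional unstable manifold entering the region, from which uniqueness up to translation follows as you say. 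With those details supplied, your argument is a legitimate independent proof of the cited result.
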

Another important point to note in the above theorem is the strict monotonicity of the solution, 
The range of the solution must be bounded by both extremes which are given in the boundary condition \eqref{3.1}.

\subsection{Loss Functions}
We set the value of $a$ to 200 and defined the Loss function only in [-200,200].
First, $Loss_{GE}$ is created by using the $L^2$ error of the governing equation of the Keller--Segel system.
\begin{align*}
   \displaystyle
    Loss^{(1)}_{GE} &= \int_{-a}^{a}(s^{nn}U^{nn}_z+\chi(U^{nn}V^{nn})_z+DU^{nn}_{zz})^2 dz \\ &\approx \frac{2a}{m}\sum_{i=1}^{m} (s^{nn}U^{nn}_z+\chi(U^{nn}V^{nn})_z+DU^{nn}_{zz})^2(z_i),
    \\Loss^{(2)}_{GE} &= \int_{-a}^{a}(s^{nn}V^{nn}_z-(\varepsilon (V^{nn})^2 -U^{nn})_z +\varepsilon V^{nn}_{zz})^2 dz \\ &\approx \frac{2a}{m}\sum_{i=1}^{m} (s^{nn}V^{nn}_z-(\varepsilon (V^{nn})^2 -U^{nn})_z +\varepsilon V^{nn}_{zz})^2(z_i).
\end{align*}
As a result of the experiment, it turned out that whether using $(U^{nn}(0)-\frac{u_-+u_+}{2})^2$ or $(V^{nn}(0)-\frac{v_-+v_+}{2})^2$ has nothing to do with an approximation capability. We select $(U^{nn}(0)-\frac{u_-+u_+}{2})^2$ as a translation error.
To prove the validity of an additional boundary condition, we integrate equation \eqref{3.1} so that we can obtain the following where $c_1$ and $c_2$ indicate the constants of integration. 
\begin{align}
\begin{cases}
 sU+\chi(UV)+DU_{z}=c_1, \nonumber\\ 
 sV-(\varepsilon V^2 -U)+\varepsilon V_{z}=c_2.\nonumber 
\end{cases}
\end{align}
The above equation implies that each of $DU_{z}$ and $\varepsilon V_{z}$ converges to a finite value as $z\rightarrow \infty$ or $z\rightarrow-\infty$. Since the only possible limit is zero, $Loss_{BC}$, the loss corresponding to the Neumann boundary condition, can be added in this case. 
$Loss_{Limit}$ is defined as in Section \ref{sec2}. Finally, $Loss_{Total}$ was defined as a sum of the four losses without weights. 

\subsection{Theoretical Results}
In this section, we denote the neural network solution by $U^{nn}, V^{nn}$ which are approximations of $U, V$ respectively. And the error terms, $U-U^{nn}$ and $V-V^{nn}$ are written as $E(z)$ and $F(z)$. To represent the set of functions that the neural network can approximate, we refer to the following definition and theorem in \cite{li1996simultaneous}. 
\begin{definition} For a compact set $K$ of $\mathbb{R}^n$ and positive integer $m$, we say $f\in \widehat{C}^m (K)$ if there exists an open $\Omega$ (depending on $f$) such 
that $K\subset \Omega$ and $f\in C^m(\Omega)$.

\end{definition}

\begin{theorem} \label{Thm3.3}(Li, Theorem 2.1 in \cite{li1996simultaneous}) Let $K$ be a compact subset of $\mathbb{R}$. For $m\in \mathbb{Z}_+$, if f belongs to $\widehat{C}^{m}(K))$ and $\sigma$ is any non-polynomial function in ${C}^{m}(\mathbb{R})$, then for any $\epsilon>0$, there exists a Network $\displaystyle N(x)=\sum_{i=1}^{h}c_i \sigma(w_i x +b_i)$ such that 
\begin{align*}
    ||D^{\alpha}(f)-D^{\alpha}(N)||_{L^\infty(K)}<\epsilon, \quad  \forall \alpha \in \left\{0, 1, 2, \cdots , m\right\}. 
\end{align*}
\end{theorem}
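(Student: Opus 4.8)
The plan is to reduce the simultaneous-approximation statement to the approximation of monomials, and then to realize each monomial --- together with all its derivatives up to order $m$ --- as a $C^m(K)$-limit of single-hidden-layer networks by differentiating $\sigma$ with respect to its inner weight. \textbf{Step 1 (reduction to polynomials).} Since $f\in\widehat{C}^{m}(K)$ there is an open $\Omega\supset K$ with $f\in C^m(\Omega)$; using a smooth cutoff I would first extend $f$ to $\tilde f\in C^m_c(\mathbb{R})$ agreeing with $f$ on a neighborhood of $K$. Fixing a bounded interval $[-R,R]\supset K$ and using that polynomials are dense in $C^m([-R,R])$ (approximate $\tilde f^{(m)}$ uniformly by a polynomial and integrate $m$ times, matching the Taylor data of $\tilde f$ at $0$), I obtain a polynomial $p$ with $\|D^\alpha f-D^\alpha p\|_{L^\infty(K)}<\epsilon/3$ for all $\alpha\le m$. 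It then suffices to approximate each monomial $x^j$, $0\le j\le\deg p$, in the $C^m(K)$ topology by a network of the prescribed form, and add up the resulting networks with the coefficients of $p$.

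\textbf{Step 2 (passing to a smooth activation).} The obstruction is that $\sigma$ is only $C^m$ while $\deg p$ may exceed $m$, so $\sigma$ cannot be differentiated enough times directly. To get around this I would mollify: with $\phi_\delta\in C^\infty_c$ a standard mollifier, set $\sigma_\delta:=\sigma*\phi_\delta\in C^\infty(\mathbb{R})$. One checks $\sigma_\delta$ is still non-polynomial for some small $\delta>0$: if $\sigma*\phi_\delta$ were a polynomial for every $\delta$, then applying a second mollification and using associativity of convolution forces $\deg(\sigma*\phi_\delta)$ to be independent of $\delta$, and letting $\delta\to 0$ would exhibit $\sigma$ as a uniform limit of polynomials of bounded degree on every bounded interval, hence a polynomial --- a contradiction. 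Moreover each term $\sigma_\delta(wx+b)$ is a $C^m(K)$-limit of Riemann sums $\sum_l\phi_\delta(t_l)\,\Delta t\,\sigma(wx+b-t_l)$, which are themselves networks with activation $\sigma$ (the $x$-derivatives up to order $m$ converge because each $\sigma^{(k)}$, $k\le m$, is uniformly continuous on compacts). So it is enough to approximate $x^j$ in $C^m(K)$ by networks built from the smooth activation $\sigma_\delta$.

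\textbf{Step 3 (differentiating in the weight).} Since $\sigma_\delta\in C^\infty$ is non-polynomial, $\sigma_\delta^{(j)}\not\equiv 0$, so I can fix $b_0$ with $\sigma_\delta^{(j)}(b_0)\ne 0$. Then $\partial_w^j\big|_{w=0}\,\sigma_\delta(wx+b_0)=x^j\sigma_\delta^{(j)}(b_0)$, and the $j$-th divided difference of $w\mapsto\sigma_\delta(wx+b_0)$ at $0$,
\[
\frac{1}{h^j}\sum_{i=0}^{j}(-1)^{j-i}\binom{j}{i}\,\sigma_\delta(ih\,x+b_0),
\]
is exactly a network $\sum_{i=0}^{j}c_i\,\sigma_\delta(w_i x+b_0)$ with $w_i=ih$; as $h\to 0$ it converges to $x^j\sigma_\delta^{(j)}(b_0)$ in $C^m(K)$, since $\partial_x^k$ commutes with the divided difference and $(w,x)\mapsto\partial_x^k\sigma_\delta(wx+b_0)$ has a continuous, locally bounded $(j{+}1)$-st derivative in $w$ (this is exactly where the smoothness gained in Step 2 is needed). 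Dividing by $\sigma_\delta^{(j)}(b_0)$ yields the desired $C^m(K)$-approximation of $x^j$ by $\sigma_\delta$-networks, and then by Step 2 by $\sigma$-networks.

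Chaining the three approximations, each taken small relative to the preceding tolerance, produces a network $N(x)=\sum_{i=1}^{h}c_i\sigma(w_i x+b_i)$ with $\|D^\alpha f-D^\alpha N\|_{L^\infty(K)}<\epsilon$ for every $\alpha\in\{0,\dots,m\}$. I expect Step 2 to be the main obstacle: the real work is to squeeze past the limited $C^m$ regularity of $\sigma$ while keeping the property of being \emph{non-polynomial} alive under mollification and while re-expressing $\sigma_\delta$-networks as $C^m$-limits of genuine $\sigma$-networks; once a smooth non-polynomial activation is in hand, the weight-differentiation argument of Step 3 is routine bookkeeping.
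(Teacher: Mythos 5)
The paper does not prove this statement: Theorem \ref{Thm3.3} is imported verbatim as Theorem 2.1 of the cited reference \cite{li1996simultaneous}, so there is no in-paper proof to compare against. Your reconstruction is, however, a correct proof along the standard lines of that literature (Li; Leshno--Lin--Pinkus--Schocken): reduce to polynomials by simultaneous Weierstrass approximation of the derivatives, realize monomials via divided differences of $w\mapsto\sigma(wx+b_0)$ in the weight, and use convolution with a mollifier both to gain the smoothness needed to take $j>m$ divided differences and to write $\sigma_\delta$-networks back as $C^m$-limits of genuine $\sigma$-networks via Riemann sums. The two points that usually need care are both handled adequately: (i) your degree argument for why $\sigma\ast\phi_\delta$ cannot be a polynomial for every $\delta$ is valid (convolution with a compactly supported kernel of nonzero mass preserves the degree and the leading coefficient, so all the degrees coincide, and a locally uniform limit of polynomials of bounded degree is a polynomial, contradicting non-polynomiality of $\sigma$); and (ii) the $C^m(K)$-convergence of the Riemann sums and of the divided differences follows from uniform continuity of $\sigma^{(k)}$, $k\le m$, on compacts and smoothness of $(w,x)\mapsto\sigma_\delta(wx+b_0)$, as you say. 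So the proposal is a correct, essentially self-contained proof of the cited result rather than a deviation from anything the paper does.
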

Our neural network involves the additional activation function to approximate a function with known upper and lower bounds. A similar theorem can be obtained for the approximation capability of the modified natural network model by applying the theorem above.
\begin{corollary}\label{Cor3.4} Let $S(x)$ denote the sigmoid function $\displaystyle \frac{e^x}{1+e^x}$. Suppose that function $U$ is bounded with lower bound $u_+$ and upper bound $u_-(\ne u_+)$. For $m \in \mathbb{Z}_{+}$, if $U$ belongs to $\widehat{C}^{m}(K)$ and $\sigma$ is any non-polynomial function in ${C}^{m}(\mathbb{R})$, then for any $\epsilon>0$, there exists a neural network 
\begin{equation}
    N(x)=3(u_--u_+) S(\sum_{i=1}^{h} c_i \sigma(w_{i}x+b_i))+(2u_+-u_-) \nonumber
\end{equation}
such that
\begin{equation}
    ||D^{\alpha}(U)-D^{\alpha}(N)||_{L^\infty(K)}<\epsilon, \quad  \forall \alpha \in \left\{0, 1, 2, \cdots , m\right\}. \nonumber
\end{equation}
\end{corollary}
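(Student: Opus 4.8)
The plan is to ``peel off'' the outer affine--sigmoid map and reduce the statement to Theorem~\ref{Thm3.3}. Write $H(y) := 3(u_- - u_+)\,S(y) + (2u_+ - u_-)$, so that the network appearing in the corollary is exactly $N = H\circ g$ for a plain one--hidden--layer network $g(x) = \sum_{i=1}^{h} c_i\,\sigma(w_i x + b_i)$. Since $u_+ \le U(x) \le u_-$ and $u_- > u_+$, the normalized profile
\[ W(x) := \frac{U(x) - (2u_+ - u_-)}{3(u_- - u_+)} \]
takes values in $[\tfrac13,\tfrac23]\subset(0,1)$ on $K$, and a direct computation gives $U = H\circ\phi$, where $\phi := S^{-1}\!\circ W$ and $S^{-1}(y) = \log\frac{y}{1-y}$ is the logit, which is $C^\infty$ on $(0,1)$. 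Thus the idea is: if a plain network $g$ approximates $\phi$ in $C^m$, then $N = H\circ g$ approximates $U = H\circ\phi$ in $C^m$; I would make this quantitative.

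First I would verify $\phi \in \widehat{C}^m(K)$ so that Theorem~\ref{Thm3.3} applies. By hypothesis $U\in C^m(\Omega)$ for some open $\Omega\supset K$; as $W$ is an affine image of $U$ it is continuous on $\Omega$ with $W(K)\subset(0,1)$, so $\Omega':=\Omega\cap W^{-1}\big((\tfrac14,\tfrac34)\big)$ is open, contains $K$, and $\phi = S^{-1}\!\circ W\in C^m(\Omega')$ by the chain rule; hence $\phi\in\widehat{C}^m(K)$ and $M:=\max_{0\le\alpha\le m}\|D^\alpha\phi\|_{L^\infty(K)}<\infty$. Next, applying Theorem~\ref{Thm3.3} to $\phi$ with the given non--polynomial $\sigma\in C^m(\mathbb{R})$: for any $\delta\in(0,1]$ there is a network $g(x)=\sum_{i=1}^h c_i\,\sigma(w_i x+b_i)$ with $\|D^\alpha\phi-D^\alpha g\|_{L^\infty(K)}<\delta$ for all $\alpha\in\{0,1,\dots,m\}$; set $N:=H\circ g$, which has precisely the required form.

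It then remains to bound $\|D^\alpha(U-N)\|_{L^\infty(K)} = \|D^\alpha(H\circ\phi-H\circ g)\|_{L^\infty(K)}$. The key point is that $H\in C^\infty(\mathbb{R})$ has \emph{all} derivatives bounded on $\mathbb{R}$: indeed each $S^{(k)}$ is a polynomial in $S\in(0,1)$, hence bounded, so every $H^{(j)}$ is bounded and globally Lipschitz. Expanding $D^\alpha(H\circ\phi)$ and $D^\alpha(H\circ g)$ via the Fa\`a di Bruno formula, subtracting and telescoping, and using $\max_{\alpha\le m}\|D^\alpha\phi\|_{L^\infty(K)}\le M$, $\max_{\alpha\le m}\|D^\alpha g\|_{L^\infty(K)}\le M+1$ (valid since $\delta\le1$), and $\max_{\alpha\le m}\|D^\alpha(\phi-g)\|_{L^\infty(K)}<\delta$, one gets $\|D^\alpha(U-N)\|_{L^\infty(K)}\le C\delta$ for all $\alpha\le m$, with $C=C(m,M,u_-,u_+)$ independent of $\delta$; taking $\delta=\min\{1,\epsilon/(2C)\}$ then yields the claim. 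I expect this last estimate to be the main (though routine) obstacle: it is the quantitative assertion that composition with a fixed $C^{m+1}$ function having bounded derivatives is continuous in the $C^m$ norm, and it is precisely here that the global boundedness of the derivatives of the sigmoid is used — so that no a priori control on the range of the inner network $g$ is needed.
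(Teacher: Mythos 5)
Your proposal is correct and follows essentially the same route as the paper's own proof: normalize $U$ into the range of the sigmoid, apply Theorem~\ref{Thm3.3} to $\phi = S^{-1}\circ W$ (the paper's $f = S^{-1}\circ\frac{U-C_2}{C_1}$), and transfer the $C^m$ approximation back through the outer sigmoid layer. Your write-up is in fact somewhat more careful than the paper's --- explicitly verifying $W(K)\subset[\tfrac13,\tfrac23]$ so that $\phi\in\widehat{C}^m(K)$, and using the global boundedness of all derivatives of $S$ together with Fa\`a di Bruno rather than the paper's appeal to uniform continuity --- but the underlying argument is the same.
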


\begin{proof}
    Let's denote $3(u_--u_+)$, $2u_+-u_-$ by $C_1, C_2$ respectively. $S, S^{-1}$ are smooth functions so that $ f:= S^{-1}\circ \left(\frac{U - C_2}{C_1}\right)$ lies in $\widehat{C}^{m}(K)$. By Theorem \ref{Thm3.3}, $f$ can be approximated by a neural network $ f_N(x)=\sum_{i=0}^{h}c_i \sigma(w_i x +b_i)$ with the property $||D^\alpha(f)-D^\alpha(f_N)||_{L^{\infty}(K)}<\epsilon$, $\forall 0\le \alpha\le m$ for some positive $\epsilon$. Let $N(x)$ be a neural network defined as $C_1 {S\circ f_N } + C_2$. Since $S$ is uniformly continuous on $K$, $||U-N||_{L^{\infty(K)}}$ can be bounded by a constant multiple of $\epsilon$. Using the triangular inequality, we derive the following.
    \begin{align}
    \nonumber
    |(S\circ f)' - (S \circ f_N)'|\le |S'\circ f - S' \circ f_N||f'|_{L^{\infty}(K)}+|f'-f_N'||S'|_{L^{\infty}(K)}.
    \end{align}
    By the uniform continuity of $f'$ and $S'$ on $K$, again $||U'-N'||_{L^{\infty(K)}}$ can be bounded by a constant multiple of $\epsilon$. Since all the terms of $D^\alpha(S \circ f)$ can be represented as products of $S^{(i)}(f)$ and $f^{(j)}$, we can derive an upper bound using a similar way above.
\end{proof}

\begin{remark}
The image of $N(x)$, $(2u_+-u_-,\hspace{0.2cm} 2u_--u_+)$, can be adjusted to different open intervals containing the interval $(u_+, u_-)$ by changing the coefficients, And, $S(x)$ can be replaced by a bounded smooth function, the inverse of which must be also smooth. Since the input value of $S(x)$ can be increased rapidly to prevent convergence of learning, it is avoided to set the range of neural networks to exactly $[u_{+}, u_{-}]$.
\end{remark}
The approximate capability of the neural network for an arbitrary function in $\widehat{C}^{m}(K)$ can be applied to solve differential equations. The following theorem states that a network function with our structure can represent a function close enough to the solution of the Keller--Segel system.
\begin{theorem} \label{3.6} For any $\epsilon >0$, there exists a sufficiently large positive integer $h$\\ such that for some constant $M>0$, if the truncated domain $[-a, a]$ contains $[-M, M]$, there exist neural networks 
\begin{equation}\displaystyle \begin{cases} U^{nn}(t, x)=3(u_--u_+) S(\displaystyle \sum_{i=1}^{h} c_i \sigma(w_{i}(x-s^{nn}t))+(2u_+-u_-),\\ 
V^{nn}(t, x)=3(v_+-v_-) S(\displaystyle\sum_{i=h+1}^{2h} c_i \sigma(w_{i}(x-s^{nn}t))+(2v_--v_+),
\end{cases} \nonumber 
\end{equation}
such that,
\begin{align*}
    Loss_{Total}<\epsilon.
\end{align*}
\begin{proof}
    Let $U$ be a solution of \eqref{3.1} that satisfies the boundary condition with the translation constraint $ U(0)=\frac{u_++u_-}{2}$. We first substitute the correct speed $s$ for $s^{nn}$ and denote $x-st$ by $z$. By Corollary \ref{Cor3.4}, there exists an approximation \begin{equation}U^{nn}(z)=3(u_--u_+) S(\sum_{i=0}^{h} c_i \sigma(w_{i}(z))+(2u_+-u_-) \nonumber \end{equation} of $U(z)$ such that $||D^{\alpha}(U)-D^{\alpha}(U^{nn})||_{L^{\infty}(K)}<\epsilon$, $\forall \alpha \in \left\{0, 1, 2, \cdots , m\right\}$ for a given small $\epsilon>0$. (Note that an approximation $V^{nn}$ of $V$ satisfying similar conditions exists as well.)
    \\ It is clear that $Loss_{Trans}\le \epsilon^2 +\epsilon^2=2\epsilon^2$ so that it can be bounded by a constant multiple of $\epsilon$. We then have 
    \begin{align*}
    Loss^{(1)}_{GE}&=||s^{nn}U^{nn}_{z}+\chi(UV)_{z}+DU_{zz}||^{2}_{L^2([-a, a])}
    \\&=||sU^{nn}_{z}+\chi(U^{nn}V^{nn})_{z}+DU^{nn}_{zz}||^{2}_{L^2([-a, a])}
    \\&=||s(U^{nn}-U)_{z}+\chi(U^{nn}V^{nn}-UV)_{z}+D(U^{nn}-U)_{zz}||^{2}_{L^2([-a, a])}
    \\&\le (||s(U^{nn}-U)_{z}||_{L^2([-a, a])} + ||\chi(U^{nn}V^{nn}-UV)_{z}||_{L^2([-a, a])} 
    \\&+ ||D(U^{nn}-U)_{zz}||_{L^2([-a, a])})^{2}.
    \end{align*}
    First and third term in the square on the right side are clearly bounded by a constant multiple of $\epsilon$.
    \\ For the second term, by a standard argument,
    \begin{align}
    &||(U^{nn}V^{nn}-UV)_{z}||_{L^2([-a, a])} \nonumber
    \\&\le ||U^{nn}(V^{nn}-V)||_{L^2([-a, a])} +||(U^{nn}-U)V||_{L^2([-a, a])} \nonumber
    \\&\le ||U^{nn}||_{L^{\infty}([-a, a])}||V^{nn}-V||_{L^2([-a, a])} +||V||_{L^{\infty}([-a, a])}||U^{nn}-U||_{L^2([-a, a])} \nonumber
    \\&\le C_{1}\epsilon + C_{2}\epsilon \nonumber,
    \end{align}
    where the last inequality holds since $U^{nn}$ and $V$ are bounded. $Loss^{(2)}_{GE}$ can be estimated in a similar way to obtain the same type of bound.
    \\Finally, let $U(-a), U(a), V(-a), V(a)= u_-+\eta_1(a), u_++\eta_2(a), v_-+\eta_3(a), v_++\eta_4(a) $. By the asymptotic behavior(or boundary) of solutions, we get the following eight limits associated with $Loss_{limit}$ and $Loss_{BC}$.
    \begin{align*}
    \displaystyle \lim_{a\rightarrow \pm\infty} \eta_1(a), \lim_{a\rightarrow \pm\infty} \eta_2(a), \lim_{a\rightarrow \pm\infty} \eta_3(a), \lim_{a\rightarrow \pm\infty} \eta_4(a)=0,
    \\\displaystyle \lim_{a\rightarrow \pm\infty} \eta_1'(a), \lim_{a\rightarrow \pm\infty} \eta_2'(a), \lim_{a\rightarrow \pm\infty} \eta_3'(a), \lim_{a\rightarrow \pm\infty} \eta_4'(a)=0. 
    \end{align*}
    We finally derive the following estimates.
    \begin{align*}
    Loss_{BC}&=(\eta_1'(a)+(U^{nn}(-a)-U(-a))')^2+(\eta_2'(a)+(U^{nn}(a)-U(a))')^2+ \nonumber
    \\&(\eta_3'(a)+(V^{nn}(-a)-V(-a))')^2+(\eta_4'(a)+(V^{nn}(a)-V(a))')^2 \nonumber
    \\\le& 2(\displaystyle \sum_{i=1}^{4}(\eta_i'(a))^2+4\epsilon^2 ), \nonumber
    \\Loss_{Limit}^{(1)} &= (\eta_{1}(a)+U^{nn}-U)^2  + (\eta_{2}(a)+U^{nn}-U)^2  \nonumber
    \\&\le 2(2\epsilon^2+\eta_1(a)^2+\eta_2(a)^2).
    \end{align*} \nonumber
    Therefore, we obtain the desired result.
\end{proof}
\end{theorem}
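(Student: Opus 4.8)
The plan is to exhibit, for any prescribed $\epsilon>0$, concrete networks of the stated parametric form whose parameters are read off from the exact traveling wave, and then to bound each summand of $Loss_{Total}$ separately. I would start from the analytic pair $(U,V)$ solving \eqref{3.1} guaranteed by the proposition cited above: it is unique up to translation, strictly monotone ($U_z<0<V_z$), smooth on $\mathbb{R}$, and has range exactly pinned between the prescribed limits. Normalize the translation by $U(0)=\tfrac{u_++u_-}{2}$ and insert the exact speed $s$ from \eqref{eq3.2} into the characteristics layer, i.e. take $s^{nn}=s$ and write $z=x-st$, so that $U^{nn},V^{nn}$ become functions of $z$ alone, matching the form of $U(z),V(z)$. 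Choosing $s^{nn}=s$ is the decisive step: since $(U,V)$ is then an exact zero of both governing operators, every governing-equation residual to be estimated involves only the errors $E=U-U^{nn}$, $F=V-V^{nn}$ and their $z$-derivatives, with no leftover term coming from the speed.

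Next I would fix the truncation radius before choosing the network. From the integrated first-order system $sU+\chi UV+DU_z=c_1$, $sV-(\varepsilon V^2-U)+\varepsilon V_z=c_2$ (equivalently, by linearizing \eqref{3.1} at the two equilibria) the profile tends to its limits at an exponential rate and $U_z,V_z$ tend to $0$; hence for any tolerance there is $M>0$ such that for every $a\ge M$ the deviations $\eta_1(a),\dots,\eta_4(a)$ of $(U,V)$ from $(u_\pm,v_\pm)$ at $\pm a$ and the deviations $\eta_1'(a),\dots,\eta_4'(a)$ of $(U_z,V_z)$ from $0$ at $\pm a$ are all as small as desired. With $a\ge M$ now regarded as fixed, I apply Corollary \ref{Cor3.4} to $U$ (and likewise to $V$, whose upper and lower bounds are $v_+>v_-$) on $K=[-a,a]$: this produces networks $U^{nn},V^{nn}$ of exactly the stated sigmoid-capped form with $\|D^\alpha E\|_{L^\infty(K)}<\delta$ and $\|D^\alpha F\|_{L^\infty(K)}<\delta$ for $\alpha=0,1,2$, for a $\delta$ chosen small in terms of $\epsilon$, $a$, and the $L^\infty$-bounds of $U,V,U_z,V_z$ on $K$.

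The remaining estimates are the bookkeeping sketched in the excerpt. Clearly $Loss_{Trans}=(U^{nn}(0)-U(0))^2\le\delta^2$. For $Loss_{GE}^{(1)}$ one rewrites the integrand as $sE_z+\chi(U^{nn}V^{nn}-UV)_z+DE_{zz}$, expands the middle term by the product rule, and applies $\|fg\|_{L^2([-a,a])}\le\|f\|_{L^\infty}\|g\|_{L^2}$, using that $U^{nn},V^{nn}$ are bounded (automatic from the sigmoid cap) and that $U,V,U_z,V_z$ are bounded; this gives $Loss_{GE}^{(1)}\lesssim a\,\delta^2$, and $Loss_{GE}^{(2)}$ is identical once $(\varepsilon V^2)_z=2\varepsilon VV_z$ is treated the same way. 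For the boundary pieces, $\tfrac{d}{dz}U^{nn}(\pm a)=\eta_i'(a)+E_z(\pm a)$ gives $Loss_{BC}\le 2\sum_i\eta_i'(a)^2+8\delta^2$, and similarly $Loss_{Limit}\le 2\sum_i\eta_i(a)^2+8\delta^2$. Summing the four contributions and choosing $M$ (hence the $\eta_i,\eta_i'$) and then $\delta$ small enough relative to $\epsilon$ and $a$ yields $Loss_{Total}<\epsilon$.

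The one point that genuinely needs care — and which I expect to be the main obstacle to phrase cleanly — is the order of the choices: a network of fixed width only approximates $(U,V)$ to a fixed accuracy on a fixed compact set, whereas the $L^2$ norms defining $Loss_{GE}$ carry a factor $\sqrt{a}$ and the $\eta_i,\eta_i'$ measure how far the \emph{true} solution is from its limits at the endpoints. Both are controlled only by pinning $M$, and with it the working truncation $[-a,a]$, from the decay of the profile before the network (hence $\delta$, hence $h$) is selected; the clause that $[-a,a]$ contains $[-M,M]$ is then consistent precisely because the same decay estimate bounds the tail contributions. Everything else reduces to Hölder's inequality and the product rule.
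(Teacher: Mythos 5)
Your proposal is correct and follows essentially the same route as the paper's proof: set $s^{nn}=s$, invoke Corollary \ref{Cor3.4} on $[-a,a]$ to get $C^2$-close sigmoid-capped networks, bound $Loss_{GE}$ by the triangle inequality and $\|fg\|_{L^2}\le\|f\|_{L^\infty}\|g\|_{L^2}$, and absorb $Loss_{Limit}$ and $Loss_{BC}$ into the endpoint deviations $\eta_i(a),\eta_i'(a)$ via the exponential decay of the profile. If anything, your explicit ordering of the choices (fix $M$ from the decay, then $a$, then $\delta$ in terms of $a$, then $h$) and your attention to the $\sqrt{a}$ factor in the $L^2$ norms are more careful than the published argument, which leaves these quantifier dependencies implicit.
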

\begin{remark} Denote $U^{nn}-U, V^{nn}-V$ by $E, F$ respectively. Due to the fact that the upper and lower bounds of $U^{nn}, V^{nn}, U, V$ are exactly specified, $|E|, |F|$ will be also bounded functions with upper bounds $3|u_{-}|, 3|v_{-}|$ respectively . 
\end{remark}
Training using Adam Optimizer aims to make the value of the loss function converge to zero. The following theorem states that when the $Loss_{Total}$ is reduced, the estimated speed will converge to the correct value.
\begin{theorem} \label{3.8} Assume that the natural network architecture is constructed as in Theorem \ref{3.6}. If either $U(a)\ne U(-a)$ or $V(a)\ne V(-a)$ holds. Then for any $\epsilon>0$, there exists $M$ such that 
\begin{align*}
 \forall a>M, \hspace{.5cm} \exists \eta(a)\hspace{.2cm} such\hspace{.2cm} that\hspace{.2cm} Loss_{Total}<\eta(a)\hspace{.5cm} implies \hspace{5mm} |s^{nn}-s|<\epsilon.
\end{align*}
\end{theorem}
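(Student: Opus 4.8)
The plan is to pass to the once-integrated form of \eqref{3.1}. Integrating the first equation of \eqref{3.1} shows that $sU+\chi UV+DU_z$ is constant in $z$; letting $z\to\pm\infty$ and using $U_z\to 0$ (justified above) gives the purely algebraic identity $s(u_+-u_-)=\chi(u_-v_--u_+v_+)$, and likewise, from the second equation, $s(v_+-v_-)=\varepsilon(v_+^2-v_-^2)-(u_+-u_-)$. By the Proposition above the solution satisfies $U_z<0$ and $V_z>0$, so $u_->u_+$ and $v_-<v_+$; in particular the hypothesis $U(a)\ne U(-a)$ is met and the coefficient $u_+-u_-$ is nonzero. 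The idea is to show that a network with $Loss_{Total}$ small must satisfy the first identity up to an error controlled by $Loss_{Total}$ and $a$, and then to solve for $s^{nn}$ and compare with $s$. (If instead only the monotone component is $V$, as may happen for the equations of the later sections, one uses the second equation of \eqref{3.1} and the second identity; the argument is the same.)

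First I would set $g:=s^{nn}U^{nn}_z+\chi(U^{nn}V^{nn})_z+DU^{nn}_{zz}$, so $\|g\|_{L^2([-a,a])}^2=Loss_{GE}^{(1)}\le Loss_{Total}$, and integrate by the fundamental theorem of calculus:
\begin{equation*}
\int_{-a}^{a}g\,dz=s^{nn}(U^{nn}(a)-U^{nn}(-a))+\chi(U^{nn}(a)V^{nn}(a)-U^{nn}(-a)V^{nn}(-a))+D(U^{nn}_z(a)-U^{nn}_z(-a)).
\end{equation*}
Cauchy--Schwarz bounds the left side by $\sqrt{2a}\,\|g\|_{L^2([-a,a])}\le\sqrt{2a\,Loss_{Total}}$. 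On the right side, $Loss_{Limit}$ gives $|U^{nn}(\pm a)-u_\pm|,\,|V^{nn}(\pm a)-v_\pm|\le\sqrt{Loss_{Total}}$, $Loss_{BC}$ gives $|U^{nn}_z(\pm a)|\le\sqrt{Loss_{Total}}$, and the ranges of $U^{nn},V^{nn}$ are fixed constants by the construction of Theorem \ref{3.6}, so (for $Loss_{Total}\le1$) the products $U^{nn}(\pm a)V^{nn}(\pm a)$ lie within a data-dependent multiple of $\sqrt{Loss_{Total}}$ of $u_\pm v_\pm$. Collecting these estimates, with $|R|\le 2\sqrt{Loss_{Total}}$,
\begin{equation*}
\bigl| s^{nn}\bigl((u_+-u_-)+R\bigr)-\chi(u_-v_--u_+v_+) \bigr|\le C_a\sqrt{Loss_{Total}},
\end{equation*}
where $C_a$ depends only on $a$ and on the fixed data $\chi,D,u_\pm,v_\pm$.

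To finish, I would first extract an a priori bound: once $Loss_{Total}$ is small enough that $|R|\le|u_+-u_-|/2$, the displayed inequality forces $|s^{nn}|\le B_a$ with $B_a$ again depending only on $a$ and the data. Subtracting the exact identity $s(u_+-u_-)=\chi(u_-v_--u_+v_+)$ then gives
\begin{equation*}
|s^{nn}-s|\,|u_+-u_-|\le C_a\sqrt{Loss_{Total}}+|s^{nn}|\,|R|\le(C_a+2B_a)\sqrt{Loss_{Total}}.
\end{equation*}
So, given $\epsilon>0$, it is enough to fix $M$ large enough that Theorem \ref{3.6} applies and, for $a>M$, to set $\eta(a)=\min\{1,\,(u_+-u_-)^2/16,\,(\epsilon|u_+-u_-|/(C_a+2B_a))^2\}$; then $Loss_{Total}<\eta(a)$ implies $|s^{nn}-s|<\epsilon$.

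The delicate point is that $s^{nn}$ is an unconstrained trainable parameter, so there is no free a priori bound on it: boundedness has to be squeezed out of the smallness of $R$, and one must be careful that $C_a$ and $B_a$ genuinely depend only on $a$ and on the fixed problem data and not on how small the loss is. The $\sqrt{2a}$ factor from Cauchy--Schwarz makes these constants grow with $a$, which is exactly why $\eta$ is allowed to depend on $a$; the quantifier ``$\exists M\ \forall a>M$'' is otherwise harmless, $M$ serving only to legitimize the architecture of Theorem \ref{3.6}.
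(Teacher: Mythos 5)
Your proof is correct and follows essentially the same route as the paper's: integrate the once-integrated equation over $[-a,a]$, bound the residual integral by $\sqrt{2a\,Loss_{Total}}$ via H\"older/Cauchy--Schwarz, and solve the resulting algebraic relation for $s^{nn}$ using $u_+\ne u_-$ as the non-degeneracy condition. The differences are only bookkeeping: you compare against the limiting identity $s(u_+-u_-)=\chi(u_-v_--u_+v_+)$ and control the network's boundary values directly through $Loss_{Limit}$ and $Loss_{BC}$, whereas the paper compares against the exact solution's values at $\pm a$ and sends $a\to\infty$ to kill the $\eta_i(a)$ terms; your explicit a priori bound on $s^{nn}$ makes rigorous a step the paper relegates to a remark.
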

\begin{proof}
If we write down the equation that approximations $U^{nn}$ and $V^{nn}$ satisfy,
\begin{align}
\begin{cases}\label{eq3.3}
  s^{nn}U^{nn}_z+\chi(U^{nn}V^{nn})_z+DU^{nn}_{zz}=A(z), \\ 
  s^{nn}V^{nn}_z-(\varepsilon (V^{nn})^2 -U^{nn})_z +\varepsilon V^{nn}_{zz}=B(z),
\end{cases}
\end{align}
\\where $||A||_{L^{2}([-a ,a])}, ||B||_{L^{2}([-a ,a])}< Loss_{Total}.$ Subtracting the equation \eqref{3.1} from the equation above and integrating it over $(-a,a)$, we derive the equation below.
\begin{align}
 \begin{cases}
  (s^{nn}-s)U+s^{nn}E+\chi(EV+UF+EF)+DE_{z}\bigg\rvert_{-a}^{a}=\displaystyle\int_{-a}^{a}A(z)dz,\\ 
  (s^{nn}-s)V+s^{nn}F-(\varepsilon (F^2+2FV) -E) +\varepsilon F_{z}\bigg\rvert_{-a}^{a}=\displaystyle\int_{-a}^{a}B(z)dz.
\end{cases} \nonumber
\end{align}
Suppose that $U(a)\ne U(-a)$ holds.\\ Let $U(-a), U(a), V(-a), V(a) = u_-+\eta_1(a), u_++\eta_2(a), v_-+\eta_3(a), v_++\eta_4(a)$. Using the boundness of $U, V$ and the H\"older's inequality $\int_{-a}^{a}A(z)dz\le \sqrt{2a}||A||_{L^{2}([-a, a])}$, we have
\begin{equation} \displaystyle s^{nn}= \frac{s(U(a)-U(-a))+O(\displaystyle \sum_{i=1}^{4}\eta_i(a)+\eta_1'(a)+\eta_2'(a)+\sqrt{2a Loss_{Total}})}{U(a)-U(-a)+\eta_2(a)-\eta_1(a)}.\nonumber\end{equation}
Using the zero value of $\lim_{a\rightarrow \infty}\eta_{i}(a)$ and $\lim_{a\rightarrow \infty}\eta_{i}'(a)$ with standard arguments, we can obtain the theorem in the case of $U(a)\ne U(-a)$. The other case when $V(a)\ne V(-a)$ can be handled similarly. Therefore the theorem is proved.
\end{proof}
\begin{remark} Above theorem implies that $s^{nn}$ must be included in the interval $[s-\epsilon, s+\epsilon]$ so that it is bounded if the loss was sufficently reduced with an appropriate large interval $[-a, a]$.

\end{remark}
To compare the two solutions that satisfy similar governing equations and initial conditions, we cite a theorem in \cite{hirsch2012differential} from which some useful estimates were obtained using Gronwall's inequality. Unlike the previous results, the inequality below takes into account cases where the two functions have different initial conditions. By applying Theorem \ref{3.11}, we found the upper bound for the difference between neural network solutions and actual solutions. The bound depends on the length of the cut area, the value at the end point, and the differential coefficient error.
\begin{theorem} Let $U\in\mathbb{R}\times \mathbb{R}^n$ be an open set containing $(0, X(0))$ and $(0, Y(0))$. Let $F,G:U\rightarrow \mathbb{R}$ be continuously differentiable and satisfy the following two conditions.\\
$(i) |F(t, X)-G(t, X)|\le \epsilon, \quad \forall (t, X)\in U.\\
(ii) F(t, X)$ is $K-Lipchitz$ continuous in $X$.
\\If $X(t),Y(t)$ are solutions of the equation $X'=F(t, X)$ and $Y'=G(t,Y)$ respectively, then, 
\begin {align*}
        |X(t)-Y(t)|\le (|X(0)-Y(0)|+\frac{\epsilon}{K})\exp(K|t|)-\frac{\epsilon}{K}.
\end{align*}
\end{theorem}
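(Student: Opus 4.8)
The plan is to run the standard Gronwall (``fundamental inequality'') argument on the scalar function $w(t) = |X(t) - Y(t)|$, where $|\cdot|$ denotes the Euclidean norm on $\mathbb{R}^n$. Setting $z(t) = X(t) - Y(t)$, I would first split the velocity difference as
\[
z'(t) = \big(F(t,X(t)) - F(t,Y(t))\big) + \big(F(t,Y(t)) - G(t,Y(t))\big),
\]
and bound the two brackets using hypotheses $(ii)$ and $(i)$ respectively. This gives $|z'(t)| \le K\,w(t) + \epsilon$ on the common interval of existence of the two solutions (taken to be an interval around $0$ on which both trajectories remain in $U$, so that both hypotheses apply along the curves).

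The second step is to pass to the integral form, which avoids differentiating the (in general only Lipschitz) map $t \mapsto w(t)$. For $t \ge 0$, writing $z(t) = z(0) + \int_0^t z'(\tau)\,d\tau$ and using the triangle inequality for integrals yields
\[
w(t) \le w(0) + \int_0^t \big(K\,w(\tau) + \epsilon\big)\,d\tau .
\]
Then I would let $\phi(t)$ denote the right-hand side: $\phi$ is $C^1$, dominates $w$, and satisfies $\phi'(t) = Kw(t) + \epsilon \le K\phi(t) + \epsilon$. Introducing the shift $\psi(t) = \phi(t) + \epsilon/K$ turns this into $\psi'(t) \le K\psi(t)$, hence $\psi(t) \le \psi(0)e^{Kt}$, i.e.
\[
w(t) \le \phi(t) \le \Big(w(0) + \tfrac{\epsilon}{K}\Big)e^{Kt} - \tfrac{\epsilon}{K}, \qquad t \ge 0.
\]

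For $t \le 0$ I would reduce to the previous case by time reversal: with $s = -t \ge 0$, the functions $\bar X(s) = X(-s)$ and $\bar Y(s) = Y(-s)$ solve $\bar X' = \bar F(s,\bar X)$ and $\bar Y' = \bar G(s,\bar Y)$ with $\bar F(s,\cdot) = -F(-s,\cdot)$ and $\bar G(s,\cdot) = -G(-s,\cdot)$; these retain the same Lipschitz constant $K$ in the space variable and the same uniform gap $\epsilon$, while $\bar X(0)=X(0)$ and $\bar Y(0)=Y(0)$. Applying the estimate above to $\bar X,\bar Y$ at parameter $s=|t|$ gives $|X(t)-Y(t)| = |\bar X(s)-\bar Y(s)| \le (|X(0)-Y(0)| + \epsilon/K)e^{K|t|} - \epsilon/K$, which combined with the $t\ge 0$ case is the claimed bound.

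The routine parts are the two triangle-inequality estimates of Step 1 and the elementary ODE comparison of Step 2; the only genuine subtlety is the regularity of $w$, which is precisely why I would work with the integral inequality rather than differentiating $|z(t)|$ directly (equivalently, one may use the Dini derivative bound $D^+|z(t)| \le |z'(t)|$, or pass through $|z(t)|^2$, which is $C^1$), together with the bookkeeping for negative time handled by the reversal above. The degenerate case $K=0$ is read as the limit $w(t)\le w(0)+\epsilon|t|$, obtained directly from Step 2 or by letting $K\to 0^+$ in the final bound.
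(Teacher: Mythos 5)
Your proof is correct and is exactly the standard Gronwall (``fundamental inequality'') argument; the paper does not prove this statement but cites it from Hirsch--Smale--Devaney, whose proof proceeds by the same decomposition $F(t,X)-F(t,Y)+F(t,Y)-G(t,Y)$, the integral form of the Gronwall inequality, and the shift by $\epsilon/K$. Your added care about the regularity of $|X(t)-Y(t)|$ and the time-reversal for $t<0$ only tightens details the reference leaves implicit.
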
 
\begin{theorem}\label{3.11} Assume that the neural network architecture is constructed as in Theorem \ref{3.6}. If we write $\sqrt{(U-U^{nn})^2(x)+(V-V^{nn})^2(x)}=E(x)$, then the following inequality holds.
\begin{align}
E(x)\le (E(-a)+\frac{\sqrt{\epsilon_{1}^{2}+\epsilon_{2}^{2}}}{K})\exp(K|x+a|)-\frac{\sqrt{\epsilon_{1}^{2}+\epsilon_{2}^{2}}}{K} ,\nonumber
\end{align}
where 
\begin{align}
\epsilon_1= &(|U_z -U^{nn}_z|+\frac{s+\chi v_{-}}{D}|U-U^{nn}|+\frac{3u_{-} -u_{+}}{D}|s-s^{nn}|, \nonumber \\&+\frac{\chi(2u_{-}-u_{+})}{D}|V-V^{nn}|)(-a)+\frac{1}{D}\int_{-a}^{x}|f|dz, \nonumber \\
\epsilon_2=&(|V_z -V^{nn}_z|+(\frac{s}{\varepsilon}+v_{+}-3v_{-})|V-V^{nn}|+\frac{1}{\varepsilon}|U-U^{nn}|, \nonumber\\
&+\frac{v_{+}-3v_{-}}{\varepsilon}|s-s^{nn}|)(-a)+\frac{1}{\varepsilon}\int_{-a}^{x}|g|dz, \nonumber \\
K=&\sqrt{(\frac{s+\chi v_{+}}{D})^2 +(\frac{\chi u_+}{D})^2+(\frac{1}{\varepsilon})^2 +(\frac{-s+2\varepsilon v_-}{\varepsilon})^2}. \nonumber
\end{align}
\end{theorem}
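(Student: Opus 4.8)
The plan is to integrate each of the two second-order systems once, reducing them to first-order systems, and then to invoke the Gronwall-type comparison theorem quoted just above, with the role of the initial time played by the left endpoint $z=-a$. First, since the left-hand side of each equation in \eqref{3.1} is a total $z$-derivative, integrating from $-a$ to $z$ gives $DU_z = c_1 - sU - \chi UV$ and $\varepsilon V_z = c_2 - sV + \varepsilon V^2 - U$, where $c_1 = (sU+\chi UV+DU_z)(-a)$ and $c_2=(sV-\varepsilon V^2+U+\varepsilon V_z)(-a)$; thus the exact profile $(U,V)$ solves an autonomous system $(U,V)'=\Phi(U,V)$. Treating \eqref{eq3.3} the same way (its left sides are also total $z$-derivatives, now with the residuals $f,g$ on the right) yields
\begin{align*}
DU^{nn}_z &= c_1^{nn} - s^{nn}U^{nn} - \chi U^{nn}V^{nn} + \int_{-a}^{z} f, \\
\varepsilon V^{nn}_z &= c_2^{nn} - s^{nn}V^{nn} + \varepsilon (V^{nn})^2 - U^{nn} + \int_{-a}^{z} g,
\end{align*}
with $c_1^{nn},c_2^{nn}$ the analogous boundary expressions at $-a$, so that $(U^{nn},V^{nn})$ solves a non-autonomous system $(U^{nn},V^{nn})'=\Psi(z,U^{nn},V^{nn})$.

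Next I would verify the two hypotheses of the quoted theorem on an open box $\Omega$ slightly enlarging $(2u_+-u_-,2u_--u_+)\times(2v_--v_+,2v_+-v_-)$, which contains both the exact profile (confined to $[u_+,u_-]\times[v_-,v_+]$ by the monotone-shock proposition) and the network profile (automatically valued in $\Omega$ because of the sigmoid output layers). For hypothesis (ii): $\Phi$ is polynomial, hence $C^1$, and its $2\times 2$ Jacobian has entries $\frac{-s-\chi V}{D}$, $\frac{-\chi U}{D}$, $-\frac1\varepsilon$, $\frac{-s+2\varepsilon V}{\varepsilon}$; bounding each over the relevant ranges of $U,V$ and taking the Frobenius norm produces exactly the constant $K$ of the statement, so $\Phi$ is $K$-Lipschitz in $(U,V)$.

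For hypothesis (i), one computes, for fixed $(U,V)\in\Omega$ and $z\in[-a,x]$,
\[
\Phi_1-\Psi_1 = \frac{1}{D}\left((c_1-c_1^{nn}) - (s-s^{nn})U - \int_{-a}^{z} f\right),
\]
and then expands $c_1-c_1^{nn}$ through the boundary values at $-a$ --- which is precisely how $|U_z-U^{nn}_z|(-a)$, $|U-U^{nn}|(-a)$, $|V-V^{nn}|(-a)$ and $|s-s^{nn}|$ enter --- while bounding $\int_{-a}^{z}|f|\le\int_{-a}^{x}|f|$ and using the uniform bounds on $U,U^{nn},V,V^{nn}$ from the exact solution's range and the sigmoid ranges; collecting terms yields $|\Phi_1-\Psi_1|\le\epsilon_1$ on $\Omega\times[-a,x]$. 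The identical computation on the second component gives $|\Phi_2-\Psi_2|\le\epsilon_2$, hence $|\Phi-\Psi|\le\sqrt{\epsilon_1^2+\epsilon_2^2}$ there. Finally, applying the quoted comparison theorem to $X=(U,V)$ and $Y=(U^{nn},V^{nn})$ on the interval with left endpoint $-a$ --- so that $|X(0)-Y(0)|$ is $E(-a)$, the constant ``$\epsilon$'' is $\sqrt{\epsilon_1^2+\epsilon_2^2}$, and $\exp(K|t|)$ becomes $\exp(K|x+a|)$ --- gives exactly the asserted inequality.

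The routine but delicate part is hypothesis (i): one must expand the differences $c_i-c_i^{nn}$ of the integration constants in terms of the boundary data at $-a$ and regroup the many resulting terms so that each is dominated by exactly the coefficient appearing in $\epsilon_1$ or $\epsilon_2$. This is where the sigmoid bounds $|U^{nn}|<2u_--u_+$ and $|V^{nn}|<v_+-2v_-$ are used, together with the pairing of the explicit $(s-s^{nn})U$ and $(s-s^{nn})V$ terms with their boundary counterparts, and it is also what forces the estimate to depend on $x$ through $\int_{-a}^{x}|f|$ and $\int_{-a}^{x}|g|$. A secondary point of care is to choose $\Omega$ large enough to contain both trajectories yet small enough that the Jacobian bounds collapse to precisely the stated $K$.
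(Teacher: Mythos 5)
Your proposal is correct and follows essentially the same route as the paper's proof: integrate \eqref{3.1} and \eqref{eq3.3} once over $(-a,z)$ to reduce to first-order systems, bound the difference of the resulting vector fields through the boundary data at $-a$ and the residual integrals, take the Lipschitz constant $K$ as the supremum of the Frobenius norm of the Jacobian, and apply the quoted Gronwall-type comparison theorem with $-a$ as the initial point. If anything, your verification of hypothesis (i) via the explicit decomposition $\Phi_1-\Psi_1=\frac{1}{D}\bigl((c_1-c_1^{nn})-(s-s^{nn})U-\int_{-a}^{z}f\bigr)$ is spelled out more carefully than in the paper itself.
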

\begin{proof} By integrating equations \eqref{3.1} and \eqref{eq3.3} over $(-a,x)$ and rearranging the equations for derivative terms, the two coupled equtions are derived.
\begin{align}
U_z&=\frac{1}{D}(sU(-a)+\chi UV(-a)-sU-\chi UV)+U_z(-a),\nonumber\\ 
V_z &= \frac{1}{\varepsilon }(sV(-a)+U(-a)-sV-U)+V^2-V^2 (-a)+V_z(-a),\nonumber \\
U^{nn}_z &= \frac{1}{D}(\int_{-a}^{x}fdz+s^{nn}U^{nn}(-a)+\chi U^{nn}V^{nn}(-a)-sU^{nn}\nonumber\\
&-\chi U^{nn}V^{nn})+U^{nn}_z(-a),\nonumber\\ 
V^{nn}_z &= \frac{1}{\varepsilon }(\int_{-a}^{x}gdz+s^{nn}V^{nn}(-a)+U^{nn}(-a)-s^{nn}V^{nn}-U^{nn})\nonumber\\
&+(V^{nn})^2-(V^{nn})^2 (-a)+V^{nn}_z(-a).\nonumber
\end{align}
Applying usual triangular inequalities and boundedness, the following is derived.
\begin{align}
|sU-s^{nn}U^{nn}|&\le |s||U-U^{nn}|+|2u_- - u_+||s-s^{nn}|,\nonumber \\
|UV-U^{nn}V^{nn}|&\le |v_-||U-U^{nn}|+|2u_- - u_+||V-V^{nn}|,\nonumber \\
|sV-s^{nn}V^{nn}|&\le |s||V-V^{nn}|+|2v_- - v_+||s-s^{nn}|,\nonumber \\
|V^2 - (V^{nn})^2|&\le |3v_- -v_+||V-V^{nn}|. \nonumber
\end{align}
The Jacobian matrix of the system \eqref{3.1} is calculated as follows.
\begin{align}
    J(U, V)= \begin{bmatrix}
 -\frac{s+\chi V}{D}& -\frac{\chi U}{D}\\ 
 -\frac{1}{\varepsilon}& \frac{-s+2\varepsilon V}{\varepsilon} 
\end{bmatrix}.\nonumber
\end{align}
Combining the chain rule and the mean value theorem, it can be seen that the right hand side of \eqref{3.1} is a Lipschitz continuous function whose Lipshcitz constant is the supremum of the Frobenius norm of $J(U,V)$. 
\\By Theorem 3.9, we can derive the desired estimate above.
\end{proof}
\subsection{Experiments}
In this section, we provide numerical experiments of the Keller--Segel system with a small $\varepsilon$. In the conducted experiments, a five-layer neural network with one-dimensional output was used. Each hidden layer consists of 512 hidden units, using the hyperbolic tangent function as an activation function. The weights are initialized based on LeCun initialization provided by PyTorch as a default \cite{paszke2019pytorch}. The loss function was minimized by using the Adam optimizer with an initial learning rate of 1e-4 for the speed variable $s^{nn}$ and 1e-6 for network weights, and the learning rates are decreased by a factor of 0.9 for every 5000 epochs. To calculate $Loss_{GE}$ in \eqref{sec2.2}, which is an approximation of definite integrals, 201 points were randomly selected from the interval $[-a, a]$ for every epoch. In the training process, we used randomly sampled points, but when plotting the value of $Loss_{Total}$ in epoch, we used a fixed uniform grid to compute the integral. 

Figure \ref{KS0} contains shapes of the solutions, speed and $Loss_{Total}$ that change as learning progresses when $(\varepsilon, D, \chi)=(0,~ 2,~ 0.5)$ with $a=200$. The precise value of $s$ is obtained as $1$ using the equation \eqref{eq3.2}. In (A) and (B), stable function values near the boundary show results consistent with the theoretically revealed exponential decay. In Figure \ref{KS0}(C) and (D), the red and blue vertical lines show the moments when a dramatic change occurs in speed and $Loss_{Total}$ rapidly decreases, respectively. It can be seen that $s^{nn}$ is approaching the correct answer before the loss is sufficiently reduced. The experimental results are consistent with that sufficient loss of power ensures an accurate speed approximation as described in Theorem \ref{3.8}. We remark that the function values converge outside the boundary, as shown in the upper left of Figure \ref{KS0} (A) although Theorem \ref{3.11} cannot explain it. The value of the function may not be reasonably predicted in the region where training was not conducted.
\begin{figure}[h]
  \includegraphics[width=\textwidth]{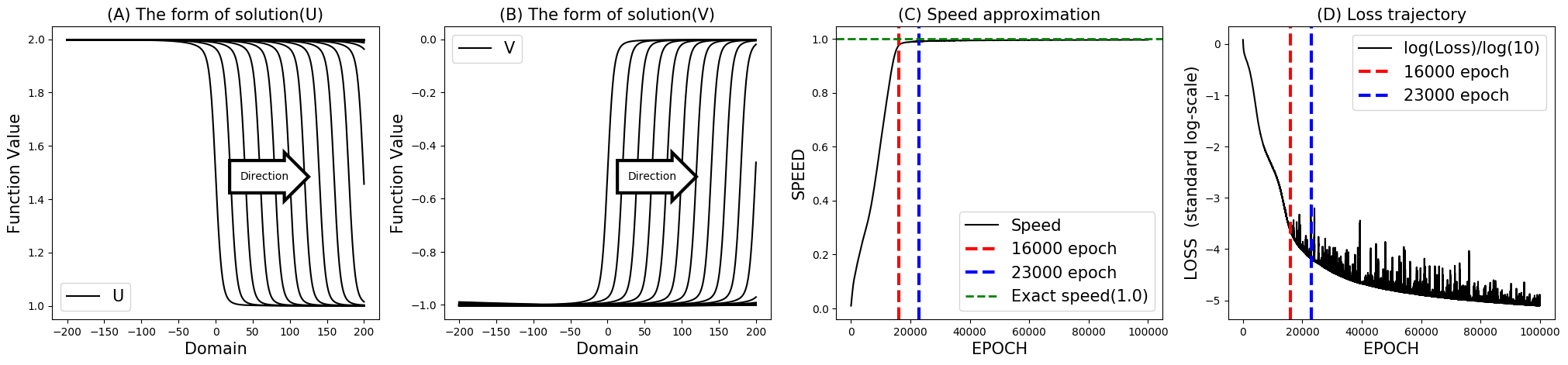}
  \caption{(A), (B): Approximated solutions for the Keller--Segel model, with model parameters $(\varepsilon, D, \chi)=(0,~2,~0.5)$, and the boundary conditions $(u_{-}, v_{-})=(2, -1)$ and $(u_{+}, v_{+})=(1, 0)$. (C): Estimated wave speeds in training epochs. (D): Trajectory of the total loss in training epochs.}
  \label{KS0}
\end{figure}
\par In the other experiment, the value of $\varepsilon$ was also set small enough to guarantee the existence and uniqueness of solutions. Given the value of $(\varepsilon, D, \chi)$ as $(0.1,~2,~0.9)$, the wave speed of the traveling wave solution is 0.9. In particular, (C) and (D) in Figure \ref{KS1} show that the moment when we significantly reduce the $Loss_{total}$ is almost identical to the moment when $s$ converges to the actual speed 0.9.
\begin{figure}[h]
\begin{center}
  \includegraphics[width=\linewidth]{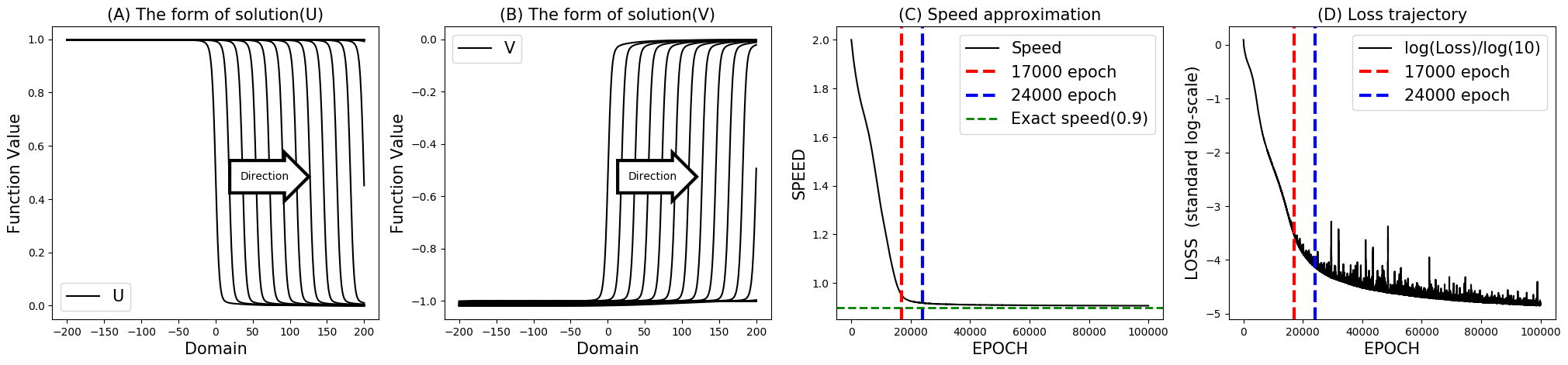}
  \caption{(A), (B): Approximated solutions for the Keller--Segel model, with model parameters $(\varepsilon, D, \chi)=(0.1,~2,~0.9)$, and the boundary conditions $(u_{-}, v_{-})=(1, -1)$ and $(u_{+}, v_{+})=(0, 0)$. (C): Estimated wave speeds in training epochs. (D): Trajectory of the total loss in training epochs.}
  \label{KS1}
\end{center}
\end{figure}
\par Figure \ref{KS0a} shows the effect of length of the interval on the convergence of losses. While maintaining the number of points used for calculating the integral, and varying the value of $a$ in $\{1, 10, 100, 200\}$, we compared the convergence process of $Loss_{Total}$ and the speed $s^{nn}$ during training. It was possible to learn the correct speed except for the case of using a small interval of length 2. However, according to Figure \ref{KS0a}(B), the value of the loss function converges to a value that is not sufficiently small for the interval [-10, 10]. The case of $a=100$ and the case of $a = 200$ showed a similar tendency in terms of the loss, while a slightly faster convergence was observed when the interval was shorter. Using the same number of points to approximate the integral with a similar computational cost seems to be the reason for the larger numerical error at wider intervals.

\begin{figure}[h]
  \includegraphics[width=\textwidth]{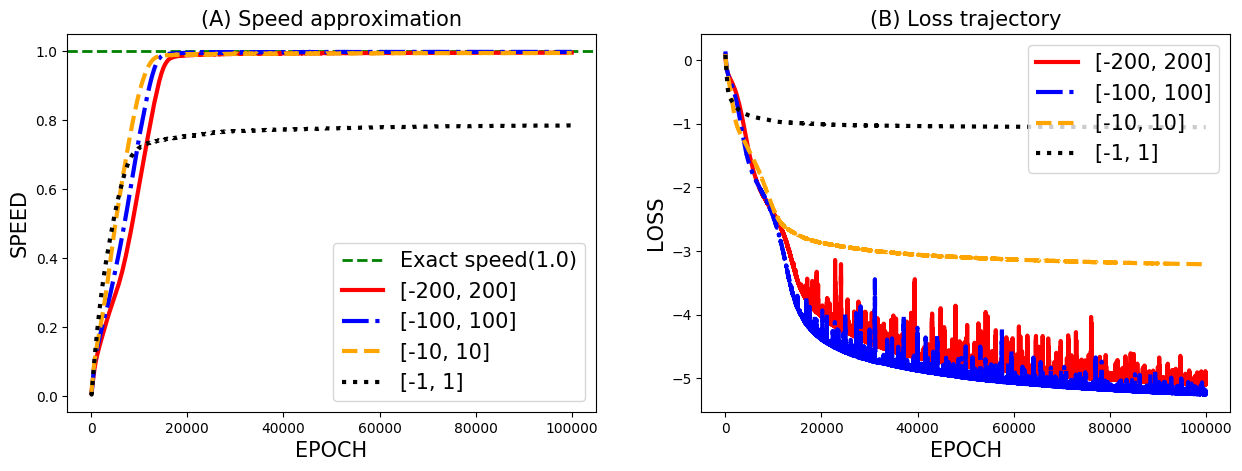}
  \caption{(A): Estimated wave speeds for different truncated intervals in training epochs. (B): Trajectories of the total loss for different truncated intervals. The model parameters are set to be $(\varepsilon, D, \chi)=(0,~2,~0.5)$, and the boundary condition is given by $(u_{-}, v_{-})=(2, -1)$ and $(u_{+}, v_{+})=(1, 0)$.}
  \label{KS0a}
\end{figure}

\par Based on the fact that the exact solution almost satisfies the Neumann boundary condition and the estimation in Theorem \ref{3.11}, we added $ Loss_ {BC} $ to the $Loss_{Total}$. Figure \ref{KS0BC} shows the comparison of experimental results with and without $Loss_{BC}$. We use a sigmoid function as an activation function for the output layer, so that the output is always positive. To avoid a situation where the Neumann boundary condition is satisfied before training, we mention that this experiment uses Xavier uniform initialization instead of LeCun initialization as the initial weight setting. As in Figure \ref{KS0BC}(A), the convergence of the speed was completed within a similar time. On the other hand, in Figure \ref{KS0BC}(B), we can see that the $Loss_{Total}$ is decreasing much faster when $Loss_{BC}$ is contained in the loss function.

\begin{figure}[h]
  \includegraphics[width=\textwidth]{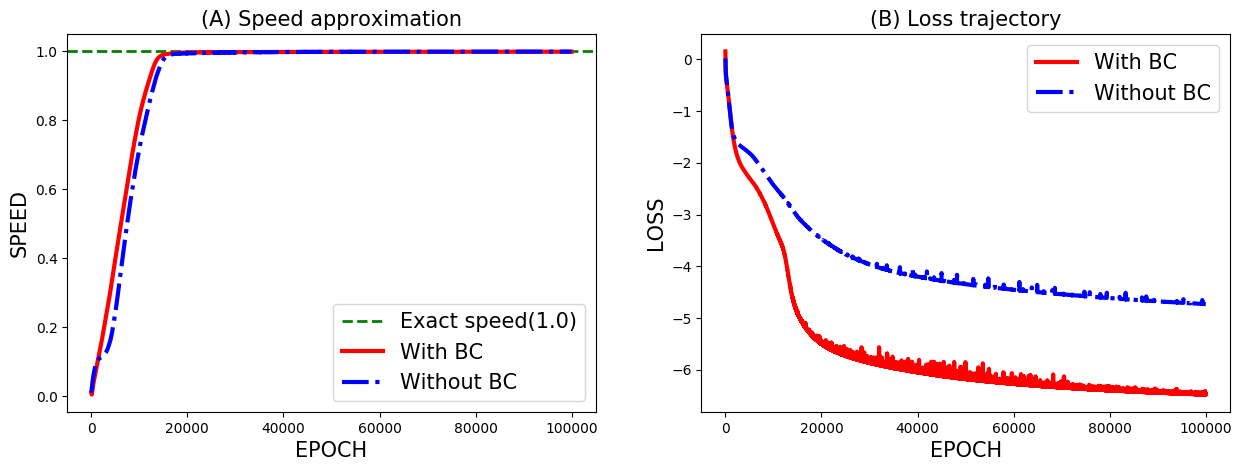}
  \caption{(A): Estimated wave speeds when trained with and without the boundary loss function. (B): Trajectories of the total loss when trained with and without the boundary loss. The model parameters are set to be $(\varepsilon, D, \chi)=(0,~2,~0.5)$, and the boundary condition is given by $(u_{-}, v_{-})=(2, -1)$ and $(u_{+}, v_{+})=(1, 0)$.}
  \label{KS0BC}
\end{figure}

\par With a few modifications, the equation \eqref{KSO} can be extended to the following multi-dimensional input problem in $\mathbb{R}^n$.
\begin{align}
 \begin{cases}
 u_t=\nabla \cdot(D\nabla u- \chi u c^{-1}\nabla c), \\ 
 c_t=\nabla \cdot (\epsilon \nabla c) -uc+\beta c. 
\end{cases}
\label{KSM}
\end{align}
\par The singularity term can be eliminated through a similar substitution $c^{-1}\nabla c$ $=(v_1, v_2, \cdots, v_n)$ as before. In this situation, the multi-dimensional traveling wave solution can be thought of as a function satisfying $(u, v)(x, t)=(U, V)$ $(k\cdot x - st)$, where $ v=\sum_{i=1}^{n} v_i$ and $k = \frac{1}{\sqrt{n}}(1, 1, \cdots, 1)$. In order to show an applicability to high dimensional problems, we conducted an experiment for the problem with 4 dimensional input. The domain $\mathbb{R}^4$ was truncated to $[-100,100]^4$, and $8^4$ randomly sampled points were used in each epoch to approximate the integral. Given the input values $t, x_1, x_2, x_3, x_4$, we used the characteristic layer as $\frac{1}{2}(x_1 + x_2 + x_3 + x_4)-s\cdot t$. For the exact solution, the function value should be determined by the value of $x_1 +x_2+x_3+x_4$. Figure \ref{KS0_4D1} shows that when two inputs are fixed and only the other two inputs change, the characteristic line of $U^{nn}$ with a slope of 1 is obtained. Figure \ref{KS0_4D2} shows similar convergence results for the wave speed and the total loss. We can also observe results compatible to the previous one that the convergence of the speed variable $s^{nn}$ precedes the optimization of the loss function. Overall, we observe that the proposed method can be used to approximate the traveling wave solution in higher dimensions.
\begin{figure}[h]
\begin{center}
  \includegraphics[width=\linewidth]{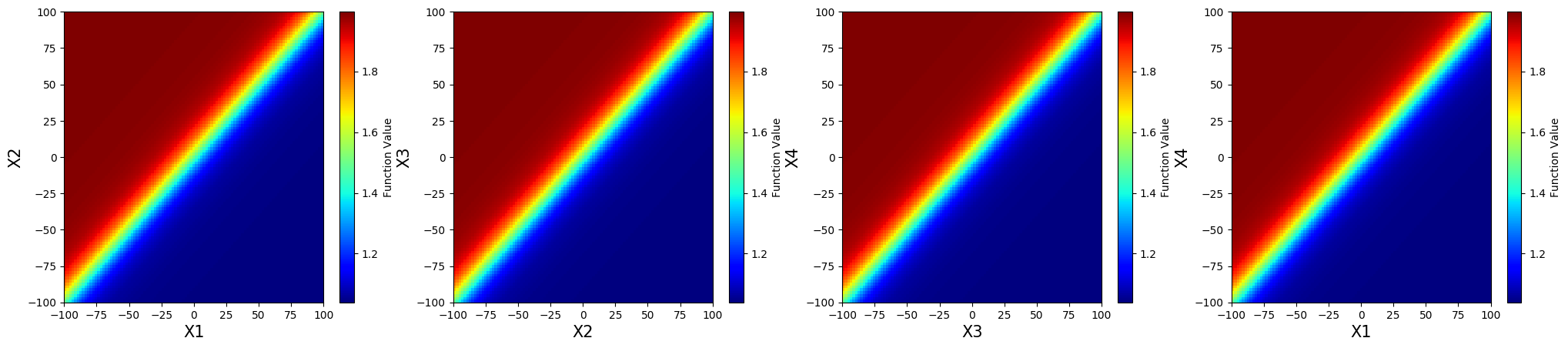}
  \caption{Approximated solutions for the 4-dimensional Keller--Segel model with the model paramters $(\varepsilon, D, \chi)=(0,~2,~0.5)$. To plot the 4-dimensional results, we fit the time at $t=0$, two of the four x-axis are fixed to be 0 and the values of the remaining two axes are sampled from -100 to 100.}
  \label{KS0_4D1}
\end{center}
\end{figure}

\begin{figure}[h]
\begin{center}
  \includegraphics[width=\linewidth]{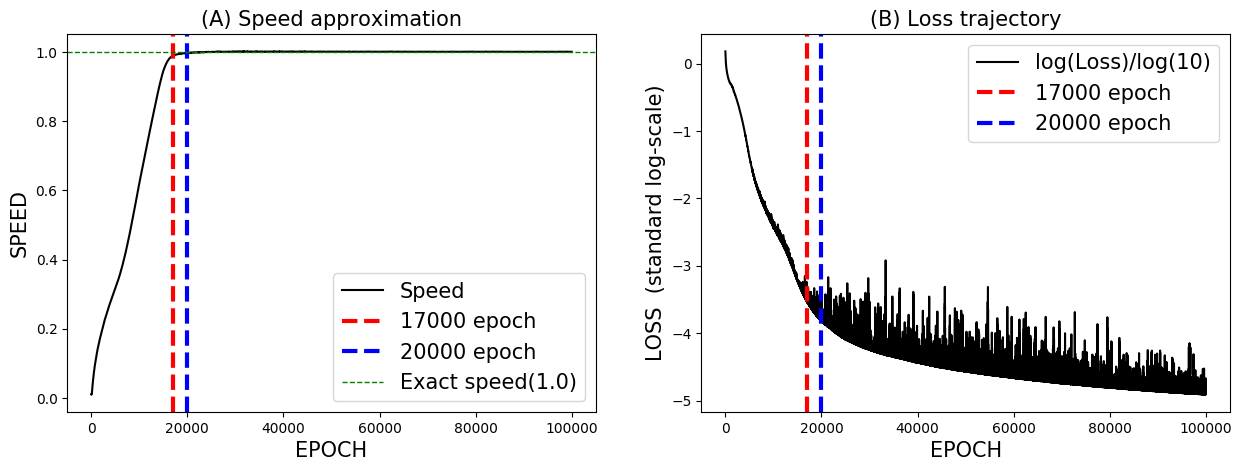}
  \caption{(A): Estimated wave speeds in training epochs for the multi-dimensional example. (B): Trajectory of the total loss in training epochs for the multi-dimensional example. The model paramters are set to be $(\varepsilon, D, \chi)=(0,~2,~0.5)$ and the boundary conditions are $(u_{-}, v_{-})=(2, -1)$ and $(u_{+}, v_{+})=(1, 0)$.}
  \label{KS0_4D2}
\end{center}
\end{figure}
\begin{remark}
After employing the traveling wave ansatz, the above multi-dimensional problem can be transformed into a one-dimensional ordinary differential equation with unknown coefficients. There are some cases where the dimension of the problem after substitution is larger than or identical to that of the original problem (See, \cite{xin1991existence}). We believe that our method can be directly applied to those by slightly modifying the ansatz layer.
\end{remark}

\section{Appplications to Allen--Cahn Model with relaxation} \label{sec4}
In this section, we consider the Allen--Cahn model with relaxation which is written as below.
\begin{align}
\begin{cases}
 u_t=v_{x}+h(u),\\ 
 \tau v_t=u_{x}-v,
\end{cases}\nonumber
\end{align}
with the boundary conditions
\begin{align}
    (u(0,x),v(0,x))\rightarrow (0,0 )\text{ as } x\rightarrow -\infty, \nonumber \\ (u(0,x),v(0,x))\rightarrow (1,0) \text{ as }x\rightarrow +\infty. \nonumber
\end{align}
\par Here $\tau$, denoting the time-scale, is given as a nonnegative constant parameter. By imposing a traveling wave ansatz $(u, v)(x, t)=(U, V)(x-st)$, we obtain the following ordinary differential equations.
\begin{align}
\begin{cases} \label{eq4.3}
 sU'+V'+h(U)=0,\\ 
 U'+\tau sV'-V=0,
\end{cases}
\end{align}
with the boundary conditions
\begin{align}
   (U, V)(-\infty)=(0, 0), \quad (U, V)(+\infty)=(1, 0). \nonumber
\end{align}
\par Set $h(u)$ as $u(1-u)(u-\alpha)$, where $\alpha \in (0, 1)$. By combining Theorem 1.1, Proposition 2.1 in \cite{lattanzio2016analytical}, and minimum values of wave speeds in \cite{mendez1999speed}, we can organize the known facts about solutions and speed as below.
\begin{theorem} \label{4.1} If $\displaystyle \sup\limits_{u\in[0,1]}\tau h'(u)<1$ holds, then there exists a unique wave speed $s$ where the system with the asymptotic condition has a traveling wave solution $(U, V)$. Additionally, the following properties are established. 
\\(i) The function $U, V$ are positive and $U$ is monotone increasing.
\\(ii) $s$ has the same sign as $-\int_{0}^{1} h(u) du$
\\(iii) $\frac{\sqrt2(\alpha-\frac{1}{2})}{\sqrt{(1-\frac{1}{5}(1-2\alpha+2\alpha^2)\tau)^2+\frac{1}{2}\tau(1-2\alpha)^2}} \le s <\frac{1}{\sqrt{\tau}}$
\\(iv) For $\tau=0$, $s$ is explicitly given as $\sqrt{2}(\alpha-\frac{1}{2})$.
\end{theorem}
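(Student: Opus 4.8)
The plan is to establish the four properties by assembling the cited results, reserving a short self-contained computation for part~(iv). First I would note that for $h(u)=u(1-u)(u-\alpha)$ one has $h'(u)=-3u^{2}+2(1+\alpha)u-\alpha$, which attains its maximum over $[0,1]$ at $u=(1+\alpha)/3\in(0,1)$, so the standing hypothesis $\sup_{u\in[0,1]}\tau h'(u)<1$ is exactly the subcharacteristic-type condition under which Theorem~1.1 of \cite{lattanzio2016analytical} yields a unique admissible wave speed $s$ and a profile $(U,V)$ solving \eqref{eq4.3} with the prescribed limits. That theorem, together with Proposition~2.1 of \cite{lattanzio2016analytical}, also supplies the strict monotonicity $U'>0$ and the positivity of $U$ and $V$, which is part~(i); the upper estimate $s<1/\sqrt{\tau}$ in part~(iii) is the statement that the front is subcharacteristic ($\pm 1/\sqrt{\tau}$ being the characteristic speeds of the principal part of the relaxation system), and I would quote it from the same source.

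For part~(ii) I would use the integral identity behind Proposition~2.1 of \cite{lattanzio2016analytical}: multiplying the first equation of \eqref{eq4.3} by $U'$ and integrating over $\mathbb{R}$, multiplying the second by $V'$ and integrating (so that $\int_{\mathbb{R}}VV'\,dz=0$ kills one term), and combining, one obtains an identity $s\,Q(U,V)=-\int_{0}^{1}h(u)\,du$ with $Q$ a quadratic functional that the hypothesis forces to be positive. A direct computation gives $\int_{0}^{1}h(u)\,du=\tfrac{1}{12}(1-2\alpha)$, so $s$ has the sign of $-\int_{0}^{1}h=\tfrac{1}{12}(2\alpha-1)$, i.e.\ of $\alpha-\tfrac12$. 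As a cross-check one may note that $s=0$ would reduce \eqref{eq4.3}, for every $\tau$, to $U''+h(U)=0$, whose heteroclinic between the equilibria $0$ and $1$ exists only when $\int_{0}^{1}h=0$.

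For the lower bound in part~(iii) I would invoke the minimum-wave-speed estimates of \cite{mendez1999speed}, specialized to the cubic $h$ and the relaxation parameter $\tau$; the only real work there is bookkeeping---reconciling the orientation of the wave, the sign of $s$, and the normalization of $h$ and $\tau$ used in \cite{mendez1999speed} with the present ones, and checking that its hypotheses follow from $\sup_{u\in[0,1]}\tau h'(u)<1$. Part~(iv) is a direct check: when $\tau=0$ the second equation of \eqref{eq4.3} gives $V=U'$, so the system collapses to $U''+sU'+h(U)=0$ with $U(-\infty)=0$, $U(+\infty)=1$; substituting a logistic profile $U(z)=(1+e^{-\lambda z})^{-1}$, for which $U'=\lambda U(1-U)$ and $U''=\lambda^{2}U(1-U)(1-2U)$, and matching the coefficient of $U$ forces $\lambda=1/\sqrt{2}$, after which matching the constant term gives $s=\sqrt{2}\,(\alpha-\tfrac12)$; uniqueness from part~(i) guarantees this is the wave speed.

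The step I expect to be the main obstacle is not an estimate but a reconciliation: ensuring that the conventions in \cite{lattanzio2016analytical} and \cite{mendez1999speed} (direction of propagation, sign of $s$, whether $h$ or $-h$ is used, the precise role of $\tau$) are aligned with \eqref{eq4.3}, and---for part~(ii)---verifying carefully that the quadratic functional $Q$ is strictly positive under the stated hypothesis, so that the sign of $s$ is pinned down rather than merely constrained.
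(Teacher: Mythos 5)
Your proposal matches the paper's treatment: the paper gives no proof of this theorem at all, presenting it as a compilation of Theorem~1.1 and Proposition~2.1 of \cite{lattanzio2016analytical} together with the minimum-speed bounds of \cite{mendez1999speed}, which is precisely how you handle parts (i)--(iii). Your added self-contained check of (iv) via the logistic ansatz (forcing $\lambda=1/\sqrt{2}$ and $s=\sqrt{2}(\alpha-\tfrac12)$) is correct and goes slightly beyond the paper, and you rightly flag that the only genuinely nontrivial point in your sketch of (ii) is the positivity of the quadratic functional $Q=\int(U')^2-\tau\int(V')^2$, which the paper sidesteps by citation.
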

The following results are obtained by applying the methods used for the attestation in the previous section.

\begin{theorem}
Assume that the neural network architecture of $U$ is constructed as in Theorem \ref{3.6} and $V$ is constructed as in Theorem \ref{Thm3.3}. Let's $E(x)$ denote the function $\sqrt{(U-U^{nn})^2+(V-V^{nn})^2}(x)$. Then, the following inequality holds.
\begin{align}
E(x)\le (E(-a)+\frac{\sqrt{\epsilon_{1}^{2}+\epsilon_{2}^{2}}}{K})\exp(K|x+a|)-\frac{\sqrt{\epsilon_{1}^{2}+\epsilon_{2}^{2}}}{K},  \nonumber
\end{align}
where
\begin{align}
    \epsilon_1=&|\frac{1}{1-\tau s^2}-\frac{1}{1-\tau {(s^{nn})}^2}|+|\frac{\tau s}{1-\tau s^2}-\frac{\tau s^{nn}}{1-\tau {(s^{nn})}^2}|, \nonumber \\
    \epsilon_2=&|\frac{s}{1-\tau s^2 }-\frac{ s^{nn}}{1-\tau {(s^{nn})}^2}|+|\frac{1}{1-\tau s^2}-\frac{1}{1-\tau {(s^{nn})}^2}, \nonumber \\
    K=&\frac{1}{1-\tau s^2 }\sqrt{(\tau s \alpha)^2 +\alpha^2+s^2 +1}. \nonumber
\end{align}

\end{theorem}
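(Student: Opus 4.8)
The argument is the relaxation-model analogue of the proof of Theorem \ref{3.11}, and the computation can be recycled almost line by line; below I only isolate the steps that genuinely differ. First I would recast the profile system \eqref{eq4.3} as a first-order autonomous system for $X=(U,V)$. In contrast with the Keller--Segel profile equations \eqref{3.1}, the system \eqref{eq4.3} has no second-order terms, so there is no need for a preliminary integration to reduce the order; one simply inverts the constant coefficient matrix $\bigl[\begin{smallmatrix} s & 1\\ 1 & \tau s\end{smallmatrix}\bigr]$, whose determinant equals $\tau s^{2}-1$. By Theorem \ref{4.1}(iii) we have $s<1/\sqrt{\tau}$, so this determinant is nonzero, and solving gives
\[
U'=\frac{V+\tau s\,h(U)}{1-\tau s^{2}},\qquad V'=\frac{-sV-h(U)}{1-\tau s^{2}},
\]
that is, $X'=F(X;s)$ with $F$ as above and $h(u)=u(1-u)(u-\alpha)$.

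Next I would treat the network the same way. Substituting $(U^{nn},V^{nn},s^{nn})$ into \eqref{eq4.3} produces residuals $f,g$ whose $L^{2}([-a,a])$-norms are controlled by $Loss_{Total}$ (the analogue of Theorem \ref{3.6} for this model). Since reducing $Loss_{Total}$ forces $s^{nn}$ into a small neighborhood of $s$ (the analogue of Theorem \ref{3.8}), the quantity $1-\tau (s^{nn})^{2}$ is likewise bounded away from zero, so the perturbed linear system can be solved as well; integrating over $(-a,x)$ exactly as in the proof of Theorem \ref{3.11} converts the residuals into the integral contributions $\int_{-a}^{x}|f|\,dz$ and $\int_{-a}^{x}|g|\,dz$, and yields $Y'=G(t,Y)$ with $Y=(U^{nn},V^{nn})$ and $G=F(\,\cdot\,;s^{nn})+(\text{residual term})$.

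Then I would verify the two hypotheses of the Gronwall-type comparison result of \cite{hirsch2012differential} used in the proof of Theorem \ref{3.11}. For the Lipschitz bound, the Jacobian of $F(\,\cdot\,;s)$ is $\frac{1}{1-\tau s^{2}}\bigl[\begin{smallmatrix}\tau s\,h'(U) & 1\\ -h'(U) & -s\end{smallmatrix}\bigr]$; bounding $|h'(U)|$ on the range of $U$ by $\alpha$ and taking the supremum of the Frobenius norm gives precisely the stated constant $K$. For condition (i) I would estimate $|F(X;s)-G(t,X)|$ componentwise: the discrepancy in the first component between using $s$ and $s^{nn}$ equals $V\bigl(\tfrac{1}{1-\tau s^{2}}-\tfrac{1}{1-\tau (s^{nn})^{2}}\bigr)+h(U)\bigl(\tfrac{\tau s}{1-\tau s^{2}}-\tfrac{\tau s^{nn}}{1-\tau (s^{nn})^{2}}\bigr)$, which is bounded by the stated $\epsilon_{1}$ after using the boundedness of $V$ and of $h(U)$ on the relevant range (together with the residual terms, exactly as in Theorem \ref{3.11}), and the second component is handled identically to produce $\epsilon_{2}$. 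Feeding $|X(-a)-Y(-a)|=E(-a)$, $\epsilon=\sqrt{\epsilon_{1}^{2}+\epsilon_{2}^{2}}$, and Lipschitz constant $K$ into that comparison theorem over the interval from $-a$ to $x$ gives the asserted inequality.

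I expect the genuinely delicate point to be bookkeeping rather than conceptual. The one structural issue is keeping the coefficient matrix invertible uniformly: one must check that $s^{nn}$ --- not only $s$ --- stays bounded away from $\pm 1/\sqrt{\tau}$, which rests on having reduced $Loss_{Total}$ far enough (Theorem \ref{3.8}) together with the a priori bound $s<1/\sqrt{\tau}$ from Theorem \ref{4.1}(iii). Beyond that, pinning down the exact constant $K$ requires the correct uniform bound on $h'$ over the relevant range of $U$, and the rest is a direct transcription of the Keller--Segel estimate in Theorem \ref{3.11}, with $D$ and $\varepsilon$ replaced by the single factor $(1-\tau s^{2})^{-1}$ and the chemotactic nonlinearity replaced by the bistable term $h$.
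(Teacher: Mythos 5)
Your proposal is correct and follows exactly the route the paper intends: the paper gives no separate proof of this theorem, stating only that it is "obtained by applying the methods used" for Theorem \ref{3.11}, and your reconstruction (inverting the coefficient matrix with determinant $\tau s^{2}-1$, computing the Jacobian $\frac{1}{1-\tau s^{2}}\bigl[\begin{smallmatrix}\tau s\,h'(U) & 1\\ -h'(U) & -s\end{smallmatrix}\bigr]$, bounding $|h'|$ by $\alpha$ to get the stated $K$, and feeding the speed-mismatch bounds $\epsilon_{1},\epsilon_{2}$ into the Gronwall comparison theorem) reproduces the stated constants. The only cosmetic difference is that, the system here being already first order, the residuals $f,g$ enter pointwise rather than through an integration over $(-a,x)$, a point on which the theorem's statement of $\epsilon_{1},\epsilon_{2}$ is itself silent.
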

\subsection{Loss Functions} Firstly, we note that the derivatives of the solution converges to zero so that we add further the Neumann boundary condition for a truncated domain.
Observing the asymptotic behavior of the solution of \eqref{eq4.3} and using the fact that $\tau s^2<1$ specified in Theorem \ref{4.1}, it can be confirmed that the extreme values of the derivatives become zero. Since only $U$ has monotonicity, $(U^{nn}(0)-\frac{u_-+u_+}{2})^2$ should be used as $Loss_{Trans}$. For $Loss_{GE}$, we used the usual $L^2$ error of the governing equation of the Allen--Cahn equation with a relaxation model. $Loss_{Limit}$ is constructed as in Section \ref{sec2} with $(u_{-}, v_{-})=(0, 0)$ and $(u_{+}, v_{+})=(1, 0)$. Before creating $Loss_{Total}$ by summing all four Losses, $Loss_{GE}$ was divided by $2L$, the length of truncated domain. Setting less weight to one loss causes the other losses to decrease first in the beginning period of learning. It was experimentally confirmed that learning the boundary conditions and extreme values first yield better results for the final approximation.
\begin{align}
    Loss_{Total}=\frac{1}{2L}Loss_{GE}+Loss_{Limit}+Loss_{BC}+Loss_{Trans}.\nonumber
\end{align}
\subsection{Numerical results}
The original domain, real line, was replaced by a finite interval $[-200,200]$ and learning was done only within it. The hyper-parameters such as a learning rate and a decay rate were set to be the same as in the experiments in the previous section. Both depth and the number of hidden units are the same as in the previous section. We used the hyperbolic tangent function as an activation function, and the weights are initialized by using LeCun initialization. The parameters $\tau$ and $\alpha$ were set from 0 to 3 and from 0.6 to 0.9 respectively to meet the prerequisites for Theorem \ref{4.1}. Figure \ref{AC} shows the trained solutions on $[-10, 10]$. It can be seen that most of the changes in values of solutions occur far from $z=-200$ and $z=200$ on which $Loss_{Limit}$ and $Loss_{BC}$ are defined.
\begin{figure}[h]
\includegraphics[width=\linewidth]{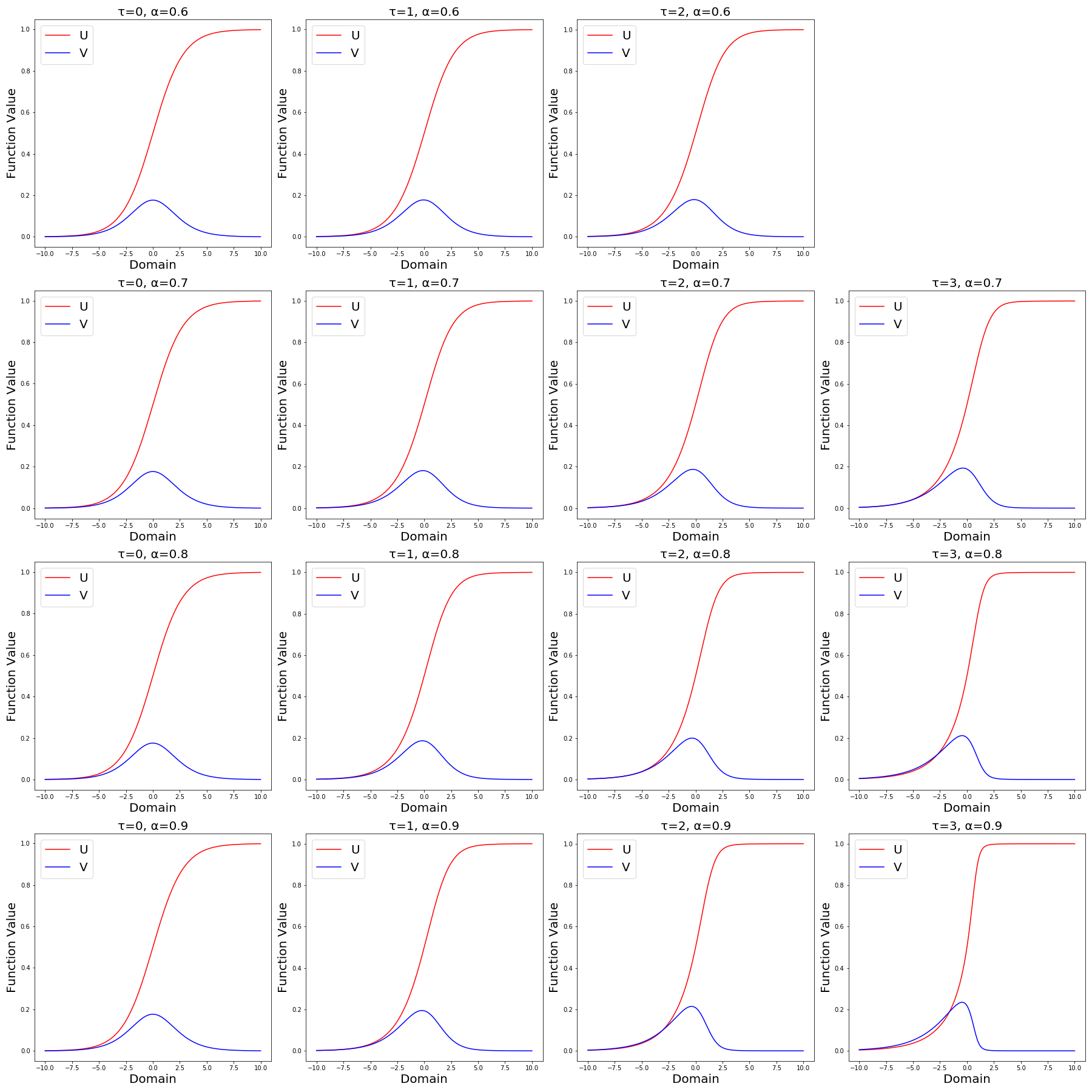}
\caption{Approximated solutions of the Allen--Cahn model with relaxation for different model parameters $(\tau, \alpha)$ with $\displaystyle  \sup\limits_{u\in[0,1]}\tau h'(u)<1$.}
\label{AC}
\end{figure}
\par If the value of $\tau$ is given as zero, the speed can be obtained explicitly as $\sqrt{2}(\alpha-\frac{1}{2})$ by Theorem \ref{4.1}. The fist line of Figure \ref{AC(exact)} shows whether the neural network solution predicts the correct speed for each alpha value. The graphs of the second line represent how losses have changed in the learning process, which is interpreted as having a pattern similar to changes in the learned speed. In Figure \ref{AC}, the approximation of $U$ was conducted satisfying the monotone increasing property. Additionally, It was implied that $V$, though not monotonous, continues to grow to a certain point near origin and then continues to decline. The slope at which $V$ decreases became steeper as the value of $\alpha$ or $\tau$ increased. The rapid change in the value of the function has had the effect of slowing the convergence of learning.   
\begin{figure}[h]
    \centering
    \includegraphics[width=\linewidth]{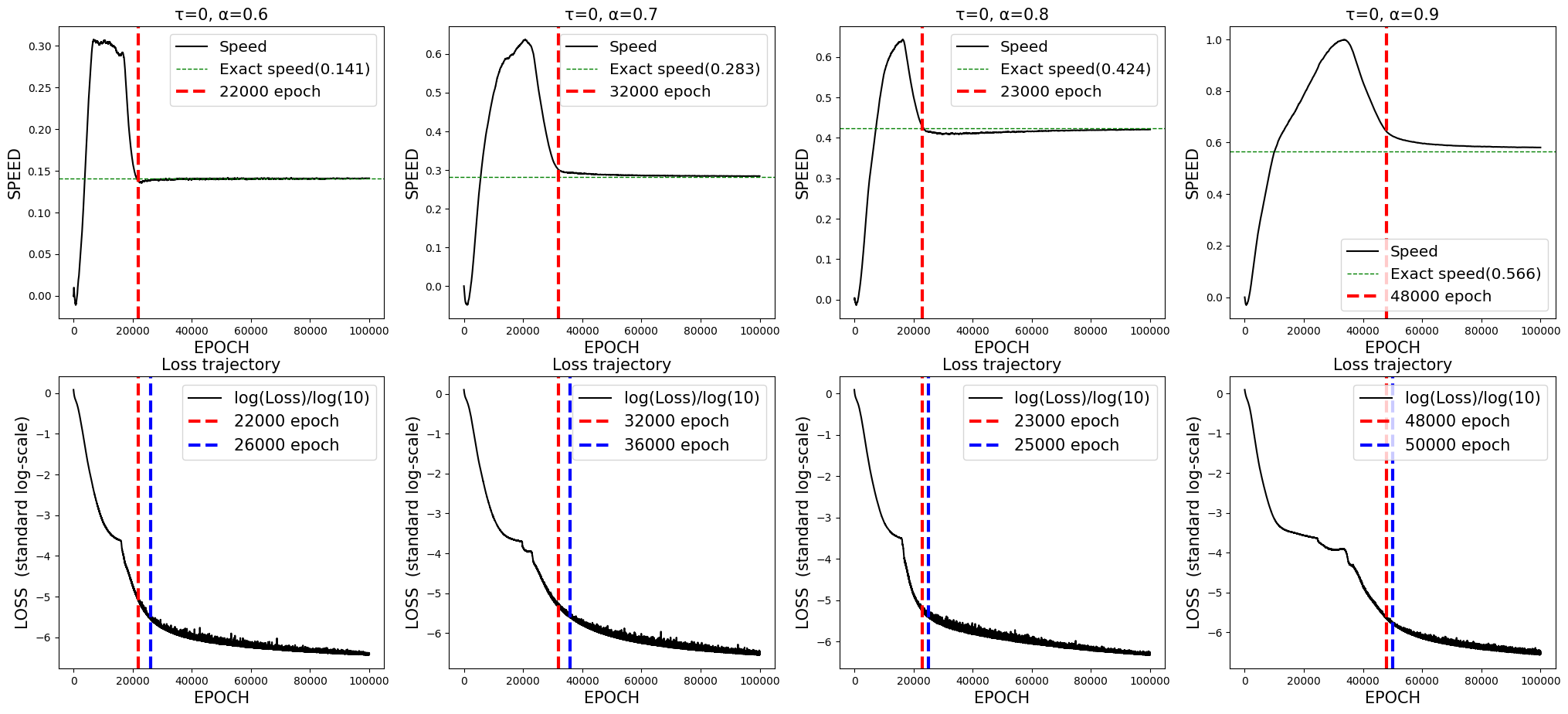}
    \caption{First row: Estimated wave speed in training epochs for different $\alpha$. Second row: Trajectories of the total loss in training epochs for different $\alpha$. In all cases $\tau$ is set to be 0, where the exact speed is known.}
    \label{AC(exact)}
\end{figure}
\par The lower and upper bounds of the speed mentioned in (iii) of Theorem \ref{4.1} are listed in Table \ref{min} and \ref{max}. The increasing values of alpha and tau narrow the gap between the two bounds, so it is suitable for verifying speed predictions accurately. Table \ref{n} and \ref{nn} contain estimated speed values for a given parameter using either the numerical method or our neural network method. In \cite{lattanzio2016analytical}, when the speed $s$ was considered as a variable, it was revealed that the point where the trajectory passing through $(U, V)=(0,0)$ and the straight line $U=\alpha$ meet decreases monotonically with respect to $s$. After showing a similar motonicity for trajectories through (1,0), the authors in \cite{lattanzio2016analytical} draw trajectory for each $s$ and record the values in Table \ref{n} that allow the two trajectories to intersect on the line $U=\alpha$. Observing the values in Table \ref{nn}, it was possible to accurately predict the speed when $\tau$ was 0. And also, even if $\tau$ is given as a different value, it predicted the values of speed similar to that of the numerical approximation.

\begin{table}[ht]
    \begin{minipage}[h]{0.48\linewidth}
        \centering 
        \caption{The minimum bound} 
            \begin{tabular}{c c c c c c c c c} 
            \hline\hline 
            $\alpha$ & $\tau=0$ & 1 & 2 & 3\\ [0.5ex] 
            \hline 
            0.6 & {\bf 0.141} & 0.156 & 0.173 & (0.194)  \\ 
            0.7 & {\bf 0.283} & 0.305 & 0.327 & 0.347  \\
            0.8 & {\bf 0.424} & 0.441 & 0.450 & 0.450 \\
            0.9 & {\bf 0.566} & 0.560 & 0.541 & 0.513 \\
            \hline 
            \end{tabular}
        \label{min} 
    \end{minipage}
    \begin{minipage}[h]{0.48\linewidth}
        \centering 
        \caption{The maximum bound} 
            \begin{tabular}{c c c c c c c c c} 
            \hline\hline 
            $\alpha$ & $\tau=0$ & 1 & 2 & 3 \\ [0.5ex] 
            \hline 
            0.6 & $\infty$ & 1.0 & 0.707 & (0.577) \\ 
            0.7 & $\infty$ & 1.0 & 0.707 & 0.577 \\
            0.8 & $\infty$ & 1.0 & 0.707 & 0.577 \\
            0.9 & $\infty$ & 1.0 & 0.707 & 0.577 \\
            \hline 
            \end{tabular}
        \label{max} 
    \end{minipage}
    
    \begin{minipage}[h]{0.48\linewidth}
        \centering
        \caption{Estimated speed(Numerical)} 
            \begin{tabular}{c c c c c c c c c} 
            \hline\hline 
            $\alpha$ & $\tau=0$ & 1 & 2 & 3 \\ [0.5ex] 
            \hline 
            0.6 & 0.14 & 0.16 & 0.17 & (0.20) \\ 
            0.7 & 0.28 & 0.31 & 0.33 & 0.35 \\
            0.8 & 0.42 & 0.44 & 0.46 & 0.46 \\
            0.9 & 0.57 & 0.56 & 0.55 & 0.52 \\
            \hline 
            \end{tabular}
        \label{n} 
    \end{minipage}
    \begin{minipage}[h]{0.48\linewidth}
        \centering 
        \caption{Estimated speed(NN)} 
            \begin{tabular}{c c c c c c c c c} 
            \hline\hline 
            $\alpha$ & $\tau=0$ & 1 & 2 & 3 \\ [0.5ex] 
            \hline 
            0.6 & 0.141 & 0.156 & 0.173 & (-) \\ 
            0.7 & 0.283 & 0.305 & 0.329 & 0.351 \\
            0.8 & 0.424 & 0.443 & 0.455 & 0.458 \\
            0.9 & 0.566 & 0.564 & 0.549 & 0.523 \\
            \hline 
            \end{tabular}
        \label{nn} 
    \end{minipage}
\end{table}

\begin{figure}[h]
    \centering
    \includegraphics[width=\linewidth]{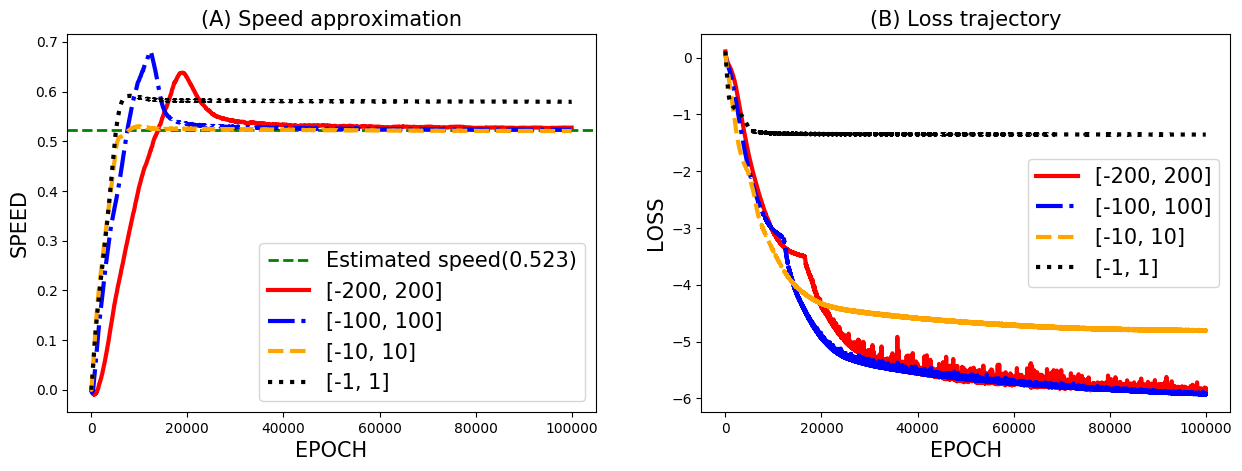}
    \caption{(A): Estimated wave speeds for different truncated intervals in training epochs. (B):  Trajectories of the total loss for different truncated intervals in training epochs. The model parameters are set to be $\tau$ = 0.9 and $\alpha$=3.}
    \label{AC15a}
\end{figure}
\par Experiments were conducted on how long the interval length should be to obtain a reasonable approximation of solutions for the Allen--Cahn equation with the relaxation model. It was intended to reveal whether the length of the interval can be determined even when the speed is unknown. We chose the case where $\alpha = 0.9$ and $\tau = 3$ where the difference of lower and upper bounds of the speed is the smallest. In Figure \ref{AC15a}, learning using the interval [-1,1] failed to converge, and learning using [-10, 10] showed some difficulty in reaching a sufficiently small loss. Due to the error occurring in the numerical integration, learning on the interval [-200, 200] showed a slightly slower progress than learning on [-100, 100].


\section{Applications to the Lotka--Volterra Competition Model} \label{sec5}
In this section, we discuss the Lotka--Volterra Competition model with two species. 
\begin{align}
\begin{cases}
 u_t=u_{xx}+u(1-u-kv),\\ 
 v_t=dv_{xx}+bv(1-v-hu),
\end{cases} \nonumber
\end{align}
with the boundary conditions
\begin{align}
    (u(0,x),v(0,x))\rightarrow (0,1) \text{ as } x\rightarrow -\infty, \nonumber\\ (u(0,x),v(0,x))\rightarrow (1,0) \text{ as } x\rightarrow +\infty. \nonumber
\end{align}
$b, d$ denote the intrinsic growth rate and diffusion coefficient respectively. $h$ and $k$ represent inter-specific competition coefficients. All the parameters are given as positive with $\min\left\{h, k\right\}>1$. The details of derivation of the model can be found in \cite{murray2007mathematical}. As in the previous section, applying the traveling wave ansatz $(u, v)(x, t)=(U, V)(x-st)$, we can derive the following equation. 
\begin{align}\label{5.1}
\begin{cases}
 U''+sU'+U(1-U-kV)=0,\\ 
 dV''+sV'+bV(1-V-hU)=0,
\end{cases} 
\end{align}
with the boundary conditions
\begin{align}
    (U, V)(-\infty)=(0, 1), \quad (U, V)(+\infty)=(1, 0).\nonumber
\end{align}
The uniqueness and existence of this system are also proven in \cite{kan1995parameter}. As for speed, there is relatively less known information than other equations. By applying substitution and the uniqueness of solution, the parameter values of $(b,h,k,d)$ with standing wave solutions were obtained in \cite{guo2013sign}. They then found a sign of the wave speed using the fact that the wave speed has a monotone dependence on parameters or the terms in which parameters are combined. In summary, they are stated as follows.
\begin{theorem} (Theorem 2.1 in \cite{kan1995parameter})
Suppose that $\min \left\{h,k\right\}>1$ holds. Then, the problem has a strictly monotone solution $(U, V)$ with speed $s$. Moreover, if $(\tilde{U}, \tilde{V})$ with speed $\tilde{s}$ is another positive solution of the problem, then $\tilde{s}$ must be equal to $s$ and there exists a constant $l\in \mathbb{R}$ such that $(U,V)(z)=(\tilde{U},\tilde{V})(z+l)$. 
\end{theorem}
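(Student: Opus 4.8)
The plan is to trade the competition (order-reversing) structure of \eqref{5.1} for a cooperative one and then run the classical program for bistable monotone fronts. \textbf{Reduction to a cooperative system.} First I would set $W=1-V$; in the variables $(U,W)$ the boundary states $(0,1)$ and $(1,0)$ become $(0,0)$ and $(1,1)$, and \eqref{5.1} becomes
\begin{align*}
 U''+sU'+U(1-k-U+kW)&=0,\\
 dW''+sW'-b(1-W)(W-hU)&=0 .
\end{align*}
On the order box $[0,1]^2$ the off-diagonal entries of the reaction Jacobian are $kU\ge 0$ and $bh(1-W)\ge 0$, so the system is quasimonotone, and the linearizations at $(0,0)$ and at $(1,1)$ are stable nodes precisely because $\min\{h,k\}>1$ (their eigenvalues are $1-k,\,-b$ and $-1,\,-b(h-1)$, all negative). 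Thus the problem is genuinely bistable, with an intermediate unstable coexistence state, and monotone-front theory applies.

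\textbf{Existence and speed selection.} For the truncated problem on $[-n,n]$ with boundary values set near the two endpoint states, I would construct an ordered pair of sub- and supersolutions of the traveling-wave operator from the stable eigenpairs at the two endpoints, cut off by $[0,1]^2$, and obtain a solution by Leray--Schauder degree; letting $n\to\infty$ and using interior elliptic estimates together with the sub/supersolutions to keep the limit away from the constants yields a front. The wave speed $s$ enters as the unique value for which the limit profile is non-constant and attains both endpoints (the bistable speed-selection mechanism), and its sign is read off, in the spirit of item (ii) of Theorem \ref{4.1}, from the sign of the kinetic work between the two stable states. Equivalently one may run the parabolic semiflow of the cooperative system from a front-like datum and use the comparison principle to obtain monotone-in-time convergence to a traveling front.

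\textbf{Strict monotonicity and uniqueness up to translation.} With a front in hand, the sliding method in one variable applied to the cooperative system, together with the strong maximum principle for cooperative parabolic systems, forces $U'>0$ and $W'>0$, i.e. $U$ strictly increasing and $V$ strictly decreasing. For uniqueness, given a second solution $(\tilde U,\tilde V,\tilde s)$, I would transfer it to $(\tilde U,\tilde W)$, translate so that the two profiles become ordered and tangent, and close the argument with the comparison principle and the exact exponential decay rates at $\pm\infty$ coming from the eigenvalues above: the two profiles must coincide, and equality of profiles forces $\tilde s=s$, yielding the translate $(U,V)(z)=(\tilde U,\tilde V)(z+l)$.

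\textbf{Main obstacle.} The delicate step is speed selection and uniqueness. For a scalar bistable equation this is the Fife--McLeod argument, but for the system it needs both a genuine comparison principle — available only after the change of variables that makes the kinetics cooperative — and sharp control of the asymptotics at $\pm\infty$, i.e. the signs and multiplicities of the eigenvalues of the two linearized matrices, which is exactly where the hypothesis $\min\{h,k\}>1$ is used. The remaining work — the a priori bounds $0<U,V<1$ and the compactness needed to extract a non-degenerate limiting profile — is routine by comparison with the scalar logistic kinetics but must be done carefully so that the limit front is not a constant.
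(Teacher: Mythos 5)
This statement is imported verbatim from the literature (Theorem~2.1 of the cited Kan-on paper); the present paper gives no proof of it, so there is no internal argument to compare yours against. Your outline is the standard program for this result and, as far as the route goes, it is the right one: the substitution $W=1-V$ does make \eqref{5.1} cooperative on $[0,1]^2$, your Jacobian eigenvalues $1-k,\,-b$ at $(0,0)$ and $-1,\,-b(h-1)$ at $(1,1)$ are correct, and $\min\{h,k\}>1$ is exactly the bistability condition that lets the monotone-systems front theory (truncation plus degree, or the parabolic semiflow with comparison) produce a strictly monotone front with a selected speed, with uniqueness by sliding.

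Two concrete caveats. First, the theorem asserts uniqueness among \emph{all} positive solutions, not only monotone ones, so before you can ``translate so that the two profiles become ordered'' you need a preliminary step showing that any positive solution with the prescribed limits is squeezed between translates of the monotone front (using the exponential decay rates at $\pm\infty$); your sketch implicitly assumes comparability. Second, the logic of the speed-uniqueness step is inverted: you cannot slide two profiles satisfying ODEs with different speeds, so the correct order is to first rule out $\tilde s\neq s$ by a dynamic comparison (two ordered fronts travelling at different speeds separate under the cooperative parabolic flow, contradicting the comparison principle), and only then slide at the common speed to identify the profiles; the observation that equal profiles force equal speeds via $(\tilde s-s)U'\equiv 0$ is fine but is not available until the profiles are already identified. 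Finally, the ``sign of the kinetic work'' heuristic has no variational justification for this system; the sign of $s$ is the separate cited result from \cite{guo2013sign} and is not part of this statement.
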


\begin{theorem} (Theorem 1.1 in \cite{guo2013sign})
Suppose that $\min \left\{h,k\right\}>1$ holds. For $a=d$, $s(b, h, k, d)$ has the same sign as $(k-h)$. In particular, $s=0$ when $h=k$.  
\end{theorem}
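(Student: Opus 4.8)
The plan is to follow the two-part strategy the introduction attributes to \cite{guo2013sign}: first identify exactly which parameters yield a standing wave ($s=0$), and then propagate the sign of $s$ away from that set by a monotonicity argument. For \textbf{Step 1}, fix parameters with $a=d$ and $h=k$ and let $(U,V,s)$ be the monotone wave furnished by Theorem~2.1 of \cite{kan1995parameter}. I would apply to the profile system \eqref{5.1} the reflection-exchange substitution $(U(z),V(z))\mapsto(V(-z),U(-z))$ (rescaling the two components beforehand if the reaction coefficients do not already match): the first-order terms change sign, the second-order terms are unchanged because $a=d$, and the two reaction terms map into one another because $h=k$, while the reflection sends the limits $(0,1)$ at $-\infty$ and $(1,0)$ at $+\infty$ to themselves. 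Thus $(V(-\cdot),U(-\cdot))$ is again a monotone wave of \eqref{5.1} with the same boundary data but with wave speed $-s$; uniqueness of the wave speed in Theorem~2.1 of \cite{kan1995parameter} then forces $-s=s$, i.e.\ $s=0$.

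For \textbf{Step 2} I would prove that, with the other parameters frozen, $s=s(b,h,k,d)$ is continuous, strictly increasing in $k$ and strictly decreasing in $h$. Since \eqref{5.1} is competitive rather than order-preserving, the comparison has to be run after the substitution $\widetilde V:=1-V$, which turns \eqref{5.1} into a system with a cooperative ordering --- here the hypothesis $\min\{h,k\}>1$ is used, to keep the relevant components comparable and the end states correctly ordered. If $k_1<k_2$ with corresponding waves $(U_i,V_i,s_i)$, then inserting $(U_2,V_2)$ into the $k_1$-profile system with speed $s_1$ leaves the residual $(s_1-s_2)U_2'+(k_2-k_1)U_2V_2$ in the first equation and $(s_1-s_2)V_2'$ in the second; because $U_2'>0$, $V_2'<0$ and $U_2V_2\ge 0$, these residuals have a forced sign unless $s_1\le s_2$, so $(U_2,V_2)$ serves as a sub/supersolution of the $k_1$-problem in the cooperative variables, and the comparison principle together with a sliding (translation) argument upgrades this to $s_1\le s_2$, with strictness coming from the strong maximum principle. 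The dependence on $h$ is handled symmetrically.

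\textbf{Step 3} is then immediate: if $h<k$, freeze $b,d,k$ and lower the first competition coefficient from $k$ to $h$, so that by Step 2 $s(b,h,k,d)>s(b,k,k,d)=0$ (the last equality by Step 1), hence $s>0$; if $h>k$ the same comparison gives $s<0$; and $h=k$ is Step 1. Therefore $\operatorname{sgn} s=\operatorname{sgn}(k-h)$.

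I expect \textbf{Step 2} to be the main obstacle. Because the system is non-cooperative one cannot compare traveling waves directly, so the whole argument must be transported to the $V\mapsto 1-V$ formulation, and one must check both that the transformed sub/supersolutions keep the correct monotonicity and limits at $\pm\infty$ and that the profile comparison can genuinely be converted into the wanted inequality between the \emph{speeds} (this is precisely the role of the sliding step). Step 1 is much shorter, but it rests on the hypothesis $a=d$: without equal diffusion coefficients the reflection-exchange substitution no longer returns the same profile system, and there is then no reason for $s$ to vanish on the diagonal $h=k$.
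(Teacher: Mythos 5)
The paper itself offers no proof of this statement --- it is quoted from \cite{guo2013sign}, with the surrounding text only summarizing the strategy there (locate the standing waves via a substitution plus uniqueness of the speed, then propagate the sign by monotone dependence on the parameters). Your overall architecture matches that summary, and Steps 2--3 are the standard monotone-comparison machinery after the cooperative change of variables $\widetilde V = 1-V$. The genuine gap is in Step 1. In the profile system \eqref{5.1} the two diffusion coefficients are $1$ and $d$, so the reflection-exchange $(U,V)(z)\mapsto(V,U)(-z)$ does \emph{not} leave the second-order terms unchanged: it swaps them, and the hypothesis $a=d$ (which equates the \emph{growth rate} of the second species with its diffusion coefficient) does not repair this. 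Concretely, writing $b$ for the growth rate and setting $\widetilde U(z)=V(-z)$, $\widetilde V(z)=U(-z)$ with $b=d$ and $h=k$: the second equation of \eqref{5.1}, evaluated at $-z$ and divided by $d$, shows that $\widetilde U$ satisfies the \emph{first} equation with speed $-s/d$, while the first equation, evaluated at $-z$ and multiplied by $d$, shows that $\widetilde V$ satisfies the \emph{second} equation with speed $-ds$. These two speeds coincide only when $s=0$ or $d=1$, so for $d\neq1$ the transformed pair is not a traveling wave of \eqref{5.1} at all and uniqueness of the wave speed cannot be invoked to conclude $-s=s$. Rescaling the components cannot help (it destroys the normalization of the carrying capacities and hence the boundary data $(0,1)$, $(1,0)$), and rescaling $z$ by $\lambda$ forces $\lambda^2=d/b=1$, returning you to the same impasse. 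As written, your Step 1 establishes the standing-wave claim only in the special case $a=d=1$.

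The role of the hypothesis $a=d$ is that it symmetrizes the \emph{standing-wave} system (where the first-order terms are absent): dividing $dV''+bV(1-V-hU)=0$ by $d$ matches the form of the first equation precisely when $b=d$. So a correct version of Step 1 must either (i) independently construct a standing wave when $a=d$ and $h=k$ --- for instance by solving the exchange-symmetric system through the reduction $V(z)=U(-z)$ --- and only then invoke uniqueness of the speed from \cite{kan1995parameter}, or (ii) use the reflected-exchanged profile not as an exact solution but as a super/sub-solution of the cooperative reformulation and exclude $s\neq0$ by the sliding method; the latter is essentially what \cite{guo2013sign} does. Your Step 2, suitably executed, would then still be needed to pass from the diagonal $h=k$ to the general sign statement.
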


Analogous results to those in the previous section are also obtained similarly.
\begin{theorem}
Suppose that the neural network architecture is constructed as in Theorem \ref{3.6}. If we write $\sqrt{(U-U^{nn})^2(x)+(V-V^{nn})^2(x)}=E(x)$, then the following inequality holds.
\begin{align}
E(x)\le (E(-a)+\frac{\sqrt{\epsilon_{1}^{2}+\epsilon_{2}^{2}}}{K})\exp(K|x+a|)-\frac{\sqrt{\epsilon_{1}^{2}+\epsilon_{2}^{2}}}{K}  \nonumber
\end{align}
for $x\in[-a, a]$, where 
\begin{align}
    \epsilon_1=&(|U_z -U^{nn}_z|+(|s|+8a+2ka)|U-U^{nn}| \nonumber\\&+2ka|V-V^{nn}|+3|s-s^{nn}|)(-a)+\int_{-a}^{x}|f|dz, \nonumber\\
    \epsilon_2=&(|V_z -V^{nn}_z|+\frac{|s|+8ab+4abh}{D}|V-V^{nn}|+\frac{2abh}{D}|U-U^{nn}| \nonumber\\
    &+\frac{2}{D}|s-s^{nn}|)(-a)+\frac{1}{\varepsilon}\int_{-a}^{x}|g|dz, \nonumber \\
    K=&\sqrt{(s+2ab-2ak)^2+(2ak)^2+\frac{(2abh)^2}{d^2}+\frac{(|s|+2bh+2abh)^2}{d^2}}. \nonumber
\end{align}

\end{theorem}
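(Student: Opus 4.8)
The plan is to follow the proof of Theorem~\ref{3.11} essentially verbatim, replacing the singular Keller--Segel system \eqref{3.1} by the Lotka--Volterra system \eqref{5.1} and invoking the same Gronwall-type comparison theorem (Theorem~3.9) at the end. First I would record that, since $U^{nn},V^{nn}$ have the form prescribed in Theorem~\ref{3.6}, they satisfy
\begin{align*}
(U^{nn})''+s^{nn}(U^{nn})'+U^{nn}(1-U^{nn}-kV^{nn})&=f(z),\\
d(V^{nn})''+s^{nn}(V^{nn})'+bV^{nn}(1-V^{nn}-hU^{nn})&=g(z),
\end{align*}
with $\|f\|_{L^2([-a,a])}$ and $\|g\|_{L^2([-a,a])}$ controlled by $Loss_{GE}^{(1)},Loss_{GE}^{(2)}$, hence by $Loss_{Total}$. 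Integrating both \eqref{5.1} and these residual equations over $(-a,x)$ and solving for the first derivatives produces two coupled first-order systems $Y'=\Psi(x,Y)$ and $(Y^{nn})'=\Psi^{nn}(x,Y^{nn})$ with $Y=(U,V)$, in which the $x$-dependence enters only through the data at $z=-a$, the remainders $\int_{-a}^{x}f\,dz$ and $\int_{-a}^{x}g\,dz$, and the order-zero integrals $\int_{-a}^{x}U(1-U-kV)\,dz$ and $\int_{-a}^{x}bV(1-V-hU)\,dz$ (together with their neural-network analogues).

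Next I would estimate the discrepancy $\Psi-\Psi^{nn}$ evaluated on a common argument. Using that the exact solution is trapped between the equilibria, $0\le U,V\le 1$, and that by Theorem~\ref{3.6} the networks $U^{nn},V^{nn}$ are confined to explicit slightly wider intervals, I would bound each product difference — $|sU-s^{nn}U^{nn}|$, $|U(1-U-kV)-U^{nn}(1-U^{nn}-kV^{nn})|$, $|bV(1-V-hU)-bV^{nn}(1-V^{nn}-hU^{nn})|$ — by an explicit constant multiple of $|U-U^{nn}|+|V-V^{nn}|+|s-s^{nn}|$; integrating the order-zero terms over $(-a,x)$ contributes the factor $a$ visible in the coefficients $8a$, $2ka$, $2abh$, $2bh$. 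Collecting these bounds at $x=-a$, together with $\int_{-a}^{x}|f|\,dz$ and $\frac1d\int_{-a}^{x}|g|\,dz$, gives exactly the quantities $\epsilon_1,\epsilon_2$. In parallel I would write \eqref{5.1} as a first-order system, compute its Jacobian, and identify the stated $K$ as the supremum over the compact range of $(U,V)$ of the Frobenius norm of that Jacobian; by the chain rule and the mean value theorem the right-hand side is then $K$-Lipschitz in $(U,V)$. With these ingredients — $\epsilon=\sqrt{\epsilon_1^2+\epsilon_2^2}$, Lipschitz constant $K$, origin shifted to $-a$, initial gap $E(-a)$ — Theorem~3.9 yields the claimed inequality for $E(x)=\sqrt{(U-U^{nn})^2+(V-V^{nn})^2}(x)$ on $[-a,a]$.

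The main obstacle is the genuinely non-local term $\int_{-a}^{x}U(1-U-kV)\,dz$, which has no counterpart in the Keller--Segel proof (there $sU_z+\chi(UV)_z$ was an exact derivative, so integrating once left an algebraic function of $(U,V)$). To fit the pointwise comparison framework of Theorem~3.9 this term must first be estimated in terms of $|U-U^{nn}|$ and $|V-V^{nn}|$ uniformly on $[-a,a]$ and then absorbed into constant-coefficient Lipschitz and source contributions; carrying this out cleanly, while keeping exact track of how $b,h,k,s$ and the half-length $a$ enter $\epsilon_1,\epsilon_2,K$, is where essentially all the effort lies, after which the conclusion follows immediately from the already-established Gronwall estimate.
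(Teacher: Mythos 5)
Your overall strategy is the same one the paper intends: the paper gives no separate proof of this statement and simply asserts that it is ``obtained similarly'' to Theorem \ref{3.11}, i.e.\ integrate the system once over $(-a,x)$, bound the discrepancy between the exact and residual first-order systems by boundary data at $-a$, the residual integrals, and $|s-s^{nn}|$ terms, take the Frobenius norm of a $2\times 2$ Jacobian as the Lipschitz constant, and invoke the comparison theorem quoted before Theorem \ref{3.11}. You have also correctly isolated the one place where the analogy genuinely breaks: unlike \eqref{3.1}, whose zeroth-order terms are exact derivatives, the reaction terms $U(1-U-kV)$ and $bV(1-V-hU)$ in \eqref{5.1} are not, so a single integration leaves the non-local quantities $\int_{-a}^{x}U(1-U-kV)\,dz$ and $\int_{-a}^{x}bV(1-V-hU)\,dz$ in the first-order system.

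The gap is in your proposed resolution of that obstacle. You say the non-local term can be ``absorbed into constant-coefficient Lipschitz and source contributions'' so that the pointwise comparison theorem still applies, but that theorem requires the two vector fields to be functions of $(t,X(t))$ evaluated at a single point, with $|F(t,X)-G(t,X)|\le\epsilon$ for a \emph{constant} $\epsilon$. The difference of the non-local terms is $\int_{-a}^{x}\bigl(O(|U-U^{nn}|)+O(|V-V^{nn}|)\bigr)dz$, which is neither a pointwise Lipschitz term in $(U,V)(x)$ nor a quantity bounded by data you control a priori --- it depends on the unknown error over the whole interval. The clean way out is to bypass Theorem 3.9 and run the integral form of Gronwall's inequality directly on $E(x)\le E(-a)+\int_{-a}^{x}(\epsilon+KE(z))\,dz$: the double integral produced by the non-local term satisfies $\int_{-a}^{x}\int_{-a}^{y}E(z)\,dz\,dy\le 2a\int_{-a}^{x}E(z)\,dz$, so it folds into the Lipschitz part at the cost of a factor of order $a$ (which is presumably the origin of the $a$-dependent coefficients in the stated $\epsilon_1$, $\epsilon_2$, and $K$, although the statement places some of these factors on quantities evaluated at $z=-a$, where they do not belong). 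Until you either make this Gronwall step explicit or pass to the four-dimensional first-order system in $(U,V,U',V')$, the final appeal to the comparison theorem does not go through as written.
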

\subsection{Loss Functions}
Note that adding the Neumann boundary condition doesn't cause a conflict with finding a solution in the Lotka--Volterra competition model. After multiplying the first equation of \eqref{5.1} by $U'$ and integrating it over $[p,q]$, we derive the following equation.
\begin{align*}
    &\frac{1}{2}((U'(q))^2-(U'(p))^2)+s\int_p^q (U')^2 dz +\frac{1}{2}(U^2(q)-U^2(p)) -\frac{1}{3}(U^3(q)-U^3(p))\\&-k\int_p^q  U'UV dz=0.
\end{align*}
The positive function $U'UV$ is bounded by $U'U$ so that it must be integrable. Therefore, $ \lim_{q\rightarrow \infty}\int_{p}^{q}U'UVdz$ exists and so does $\int_{p}^{q} (U')^2dz$ which is an increasing function in $q$. Finally, $U'(q)$ converges to some value which can only be zero. The discussion on the other side or $V$ is resolved in a similar way so that $Loss_{BC}$ is included without any problem. The solution $V$ has also the strict monotonicity, but we created $Loss_{trans}$ based on the value of $U(0)$. $Loss_{GE}$ represents the $L^2$-error of the governing equation for the Lotka--Volterra competition model in the same way as other equations. $Loss_{Limit}$ is designed as in Section \ref{sec2} with $(u_-, v_-)=(0, 1)$ and $(u_+, v_+)=(1, 0)$. As before, the importance of $Loss_{GE}$ has been diminished to increase the likelihood that the neural network solution can converge. The weights are given as in Section \ref{sec4}. 
\begin{align}
    Loss_{Total}=\frac{1}{2a}Loss_{GE}+Loss_{Limit}+Loss_{BC}+Loss_{Trans}\nonumber
\end{align}
\subsection{Experiments}
As far as we know, the only known fact about speed in the Lotka--Volterra competition model is the sign. The first experiment was aimed at the approximation of the standing waveform, the only case in which the exact speed was known. The training was conducted on the truncated domain $[-200,200]$ using the Adam optimizer. The number of hidden layers, the number of hidden units, the activation function, and the weight initialization are the same as in the previous section. Initial learning rates were set to $2\cdot10^{-4}$ for the speed variable and $2\cdot10^{-6}$ for the network weights, respectively. For every 5000 epochs, the learning rates are decreased by a factor of 0.9. In Figure \ref{LV(zero)}(A), the color gradually turning blue from left to right suggests that our algorithm captures the monotonicity of the solution $U$. Similar results are observed in Figure \ref{LV(zero)}(B). Comparing (C) and (D) in Figure \ref{LV(zero)}, it was once again confirmed that a reduced loss ensures an accurate estimated speed.
\begin{figure}[h]
    \centering
    \includegraphics[width=\linewidth]{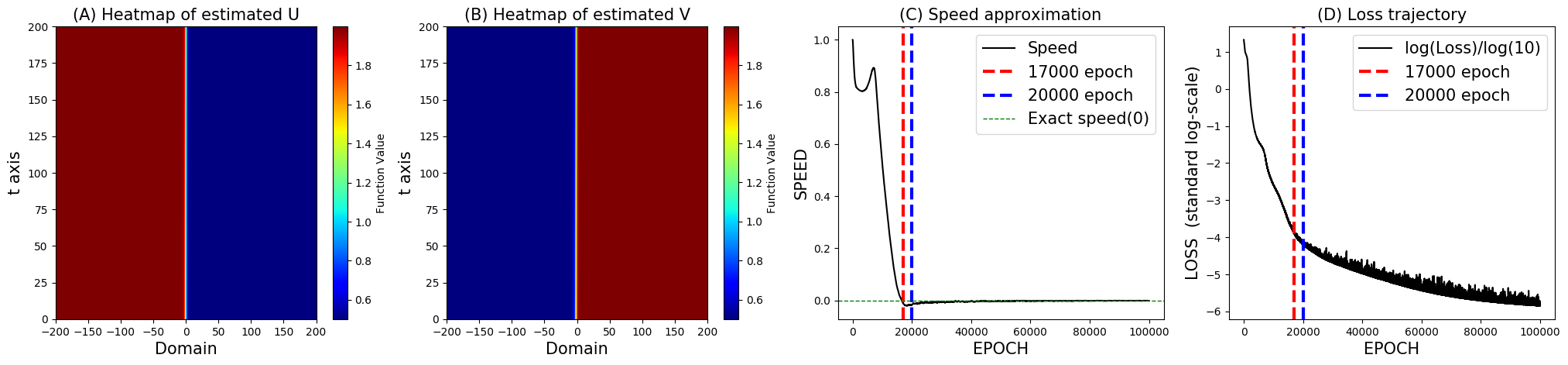}
    \caption{(A), (B): Approximated solutions for the Lotka--Volterra competition model with the model parameters are set to be $(a, h, k, d)=(2, 2, 2, 2)$, where the speed is exactly zero. (C): Estimated wave speeds in training epochs. (D): Trajectory of the total loss in training epochs.}
    \label{LV(zero)}
\end{figure}
\begin{figure}[h]
    \centering
    \includegraphics[width=\linewidth]{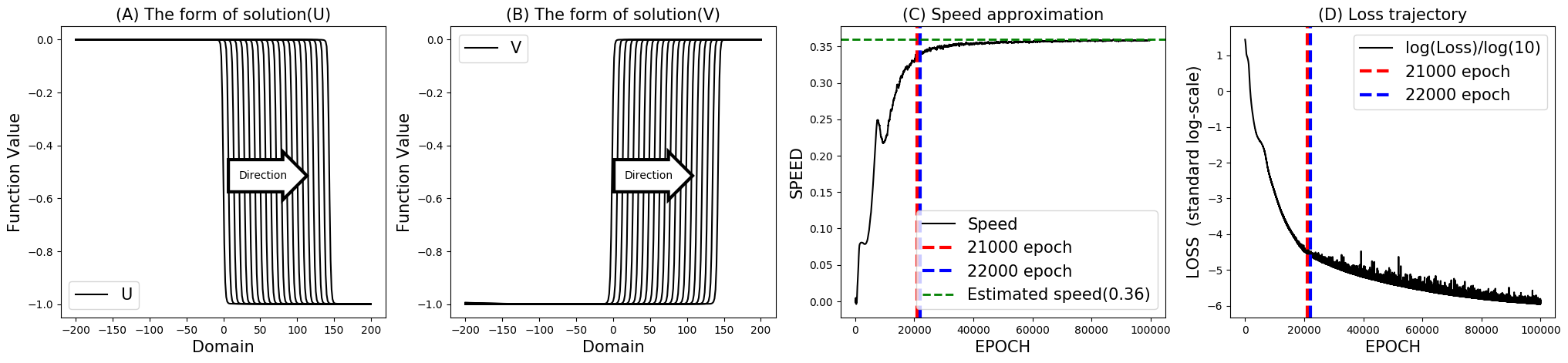}
    \caption{(A), (B): Approximated solutions for the Lotka--Volterra competition model, with model parameters $(a, h, k, d)=(2, 2, 3, 2)$ where the exact wave speed has a positive sign. (C): Estimated wave speeds in training epochs. (D): Trajectory of the total loss in training epochs.}
    \label{LV(pos)}
\end{figure}
\begin{figure}[h]
    \centering
    \includegraphics[width=\linewidth]{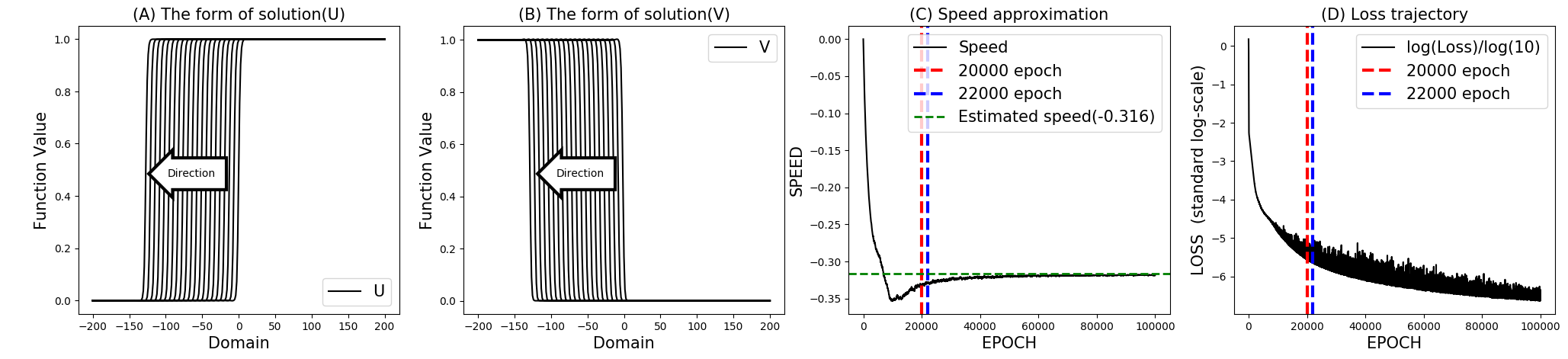}
    \caption{(A), (B): Approximated solutions for the Lotka--Volterra competition model, with model parameters $(a, h, k, d)=(2, 3, 2, 2)$ where the exact wave speed has a negative sign. (C): Estimated wave speeds in training epochs. (D): Trajectory of the total loss in training epochs.}
    \label{LV(neg)}
\end{figure}
\par The training processes and results of cases where only signs are known about speed are shown in Figure \ref{LV(pos)} and \ref{LV(neg)}. The initial value of $s^{nn}$ was set to zero to exclude prior knowledge of the sign. (A) and (B) indicate that the trained solution captures the monotonicity of U and V while accurately predicting the direction of wave speed. Figure \ref{LV(pos)}(C) and Figure \ref{LV(neg)}(C) show that the convergence of speed was almost completed before 100,000 epochs. Observing the graphs of \ref{LV(pos)}(D) and \ref{LV(neg)}(D), it can be confirmed that the convergence of the velocity and the convergence of the loss function occur simultaneously.

\par As mentioned earlier, nothing is known about the speed of the solution of \eqref{5.1} except for the sign. We compare the estimation results for different intervals to the example in Figure \ref{LV(pos)}, where the speed was estimated to be 0.36. In the case of the interval [-1, 1], the speed variable fails to converge, and the $Loss_{Total}$ is not sufficiently decreased. We also observed that increasing the length of the interval makes the training more accurate. One notable point is that Figure \ref{LV1a} shows that training on the interval [-10, 10] can yield solutions with a compatible loss and a faster convergence of estimated speed.
\begin{figure}[h]
    \centering
    \includegraphics[width=\linewidth]{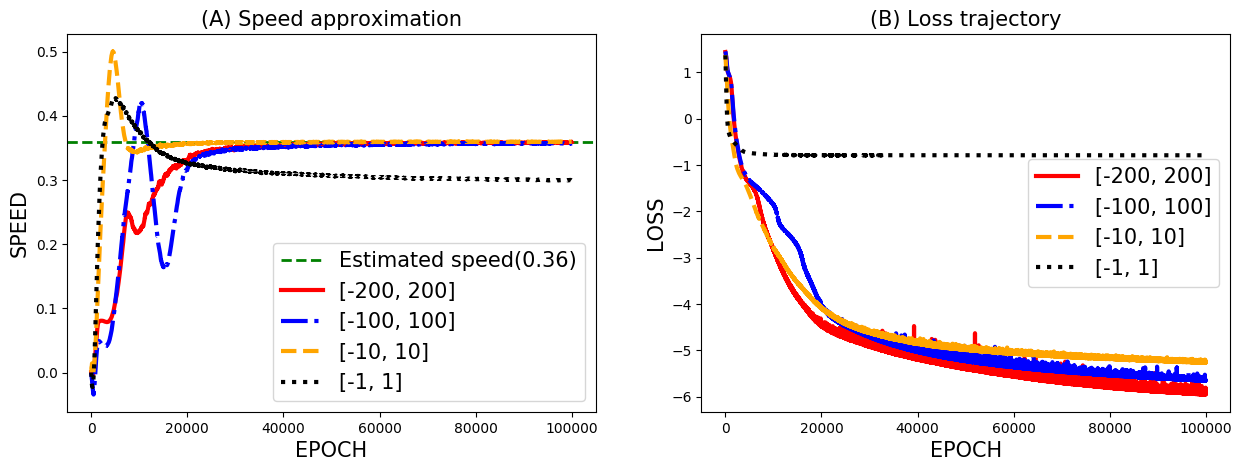}
    \caption{(A): Estimated wave speeds for different truncated intervals in training epochs. (B):  Trajectories of the total loss for different truncated intervals in training epochs. The model parameters are set to be $(a, h, k, d)=(2, 2, 3, 2)$ where the speed has a positive sign.}
    \label{LV1a}
\end{figure}

\par Figure \ref{LV1BC} shows a significant difference compared to Figure \ref{KS0BC}, between the cases whether the $Loss_{BC}$ is involved in the training. As we can see in the figure, the convergence speeds of both speed variable and the total loss are much faster when we train with the $Loss_{BC}$.

\begin{figure}[h]
    \centering
    \includegraphics[width=\linewidth]{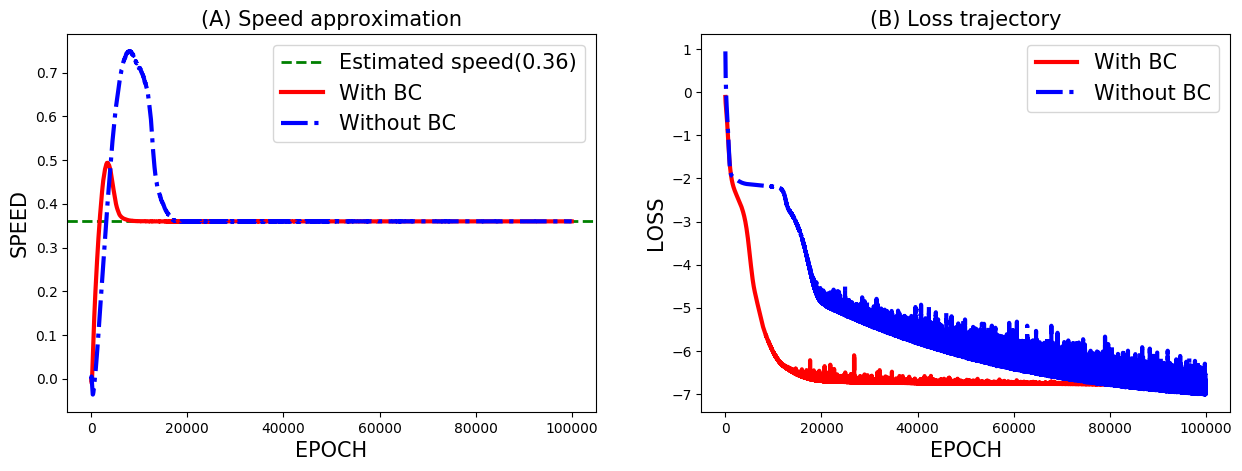}
    \caption{(A): Estimated wave speeds when trained with and without the boundary loss function for the Lotka--Volterra competition model. (B): Trajectories of the total loss when trained with and without the boundary loss for the Lotka--Volterra competition model. The model parameters are set to be $(a, h, k, d)=(2, 2, 3, 2)$ where the speed has a positive sign.}
    \label{LV1BC}
\end{figure}

\section{Conclusion and future work} \label{sec6}
It is difficult to deal with the domain $\mathbb{R}$ numerically since it is unbounded. In order to overcome this, we truncated the real line to a bounded interval with a sufficiently large length. Moreover, to accurately approximate the solution, we added the Neumann boundary condition at the boundary of the truncated region that the solution asymptotically satisfies. However, the boundary condition we gave inherently possesses a small error due to the truncation. We leave a more thorough analysis for the treatment of this error term as a future work. 

\par Each of the equations covered in this paper was known to have a unique solution and the solutions are widely studied. Thus, some analytic properties of the solutions, such as monotonicity, can be considered as criteria for determining whether the solution is well approximated. 

\par On the other hand, there are many cases where uniqueness of the solution is not guaranteed, because there are multiple wave speeds that guarantee a solution as in the case of having a minimum wave speed. Even in this case, our neural network model provides only one solution as a correct answer. We believe that it is worthwhile to consider which one of the numerous solutions has approximated by a neural network. Furthermore, a novel way of approximating all possible solutions with all possible speeds should be devised.

\par Learning an equation solver that maps a set of model parameters to a solution is also necessary. For instance, in the Keller--Segel equation one should train a new neural network every time the values of $D, \chi, \epsilon$ change. As it takes a lot of time to learn the solution for each given model parameter, a further research on neural network methods that can rapidly predict the solutions even for a set of unseen model parameters would improve learning efficiency.

\bibliographystyle{amsplaindoi} 
\bibliography{bibliograph}





\end{document}